\date{\today}
\newtheorem{theorem}{Theorem}
\newtheorem{proposition}{Proposition}
\newtheorem{corollary}{Corollary}
\newtheorem{lemma}{Lemma}
\theoremstyle{definition}
\newtheorem{example}{Example}
\newtheorem{remark}{Remark}
\begin{document}

\title[On feebly compact topologies on the semilattice $\exp_n\lambda$]{On feebly compact topologies on the semilattice $\exp_n\lambda$}
\author{Oleg~Gutik and Oleksandra Sobol}
\address{Department of Mechanics and Mathematics,  
National University of Lviv, Universytetska 1, Lviv, 79000, Ukraine}
\email{o\_\,gutik@franko.lviv.ua, ovgutik@yahoo.com, olesyasobol@mail.ru}

\keywords{Topological semilattice, semitopological semilattice, compact, countably compact, feebly compact, $H$-closed, semiregular space, regular space}

\subjclass[2010]{Primary 22A26, 22A15,  Secondary 54D10, 54D30, 54H12}

\begin{abstract}
We study feebly compact topologies $\tau$ on the semilattice $\left(\exp_n\lambda,\cap\right)$ such that $\left(\exp_n\lambda,\tau\right)$ is a semitopological semilattice. All compact semilattice $T_1$-topologies on $\exp_n\lambda$ are described. Also we prove that for an arbitrary positive integer $n$ and an arbitrary infinite cardinal $\lambda$ for a $T_1$-topology $\tau$ on $\exp_n\lambda$ the following conditions are equivalent: $(i)$ $\left(\exp_n\lambda,\tau\right)$ is a compact topological semilattice; $(ii)$ $\left(\exp_n\lambda,\tau\right)$ is a countably compact topological semilattice; $(iii)$ $\left(\exp_n\lambda,\tau\right)$ is a feebly compact topological semilattice; $(iv)$ $\left(\exp_n\lambda,\tau\right)$ is a compact semitopological semilattice; $(v)$ $\left(\exp_n\lambda,\tau\right)$ is a countably compact semitopological semilattice. We construct a countably pracompact $H$-closed quasiregular non-semiregular topology $\tau_{\operatorname{\textsf{fc}}}^2$ such that $\left(\exp_2\lambda,\tau_{\operatorname{\textsf{fc}}}^2\right)$ is a semitopological semilattice with discontinuous semilattice operation and prove that for an arbitrary positive integer $n$ and an arbitrary infinite cardinal $\lambda$ every $T_1$-semiregular feebly compact semitopological semilattice $\exp_n\lambda$ is a compact topological semilattice.
\end{abstract}

\maketitle


\begin{center}
  \emph{\textbf{Dedicated to the memory of Professor Mykola Komarnyts'kyy}}
\end{center}

\bigskip


We shall follow the terminology of~\cite{Carruth-Hildebrant-Koch-1983-1986, Clifford-Preston-1961-1967,
Engelking-1989, Gierz-Hofmann-Keimel-Lawson-Mislove-Scott-2003, Ruppert-1984}. If $X$ is a topological space and $A\subseteq X$, then by $\operatorname{cl}_X(A)$ and $\operatorname{int}_X(A)$ we denote the topological closure and interior of $A$ in $X$, respectively. By $\omega$ we denote the first infinite cardinal.

A semigroup $S$ is called an \emph{inverse semigroup} if every $a$
in $S$ possesses an unique inverse, i.e. if there exists an unique
element $a^{-1}$ in $S$ such that
\begin{equation*}
    aa^{-1}a=a \qquad \mbox{and} \qquad a^{-1}aa^{-1}=a^{-1}.
\end{equation*}
A map which associates to any element of an inverse semigroup its
inverse is called the \emph{inversion}.

A {\it topological} ({\it inverse}) {\it semigroup} is a topological space together with a continuous semigroup operation (and an~inversion, respectively). Obviously, the inversion defined on a topological inverse semigroup is a homeomorphism. If $S$ is a~semigroup (an~inverse semigroup) and $\tau$ is a topology on $S$ such that $(S,\tau)$ is a topological (inverse) semigroup, then we
shall call $\tau$ a \emph{semigroup} (\emph{inverse}) \emph{topology} on $S$. A {\it semitopological semigroup} is a topological space together with a separately continuous semigroup operation.

If $S$ is a~semigroup, then by $E(S)$ we denote the
subset of all idempotents of $S$. On  the set of idempotents
$E(S)$ there exists a natural partial order: $e\leqslant f$
\emph{if and only if} $ef=fe=e$. A \emph{semilattice} is a commutative semigroup of idempotents. A {\it topological} ({\it semitopological}) {\it semilattice} is a topological space together with a continuous (separately continuous) semilattice operation. If $S$ is a~semilattice and $\tau$ is a topology on $S$ such that $(S,\tau)$ is a topological semilattice, then we shall call $\tau$ a \emph{semilattice} \emph{topology} on $S$.

Let $\lambda$ be an arbitrary non-zero cardinal. A map $\alpha$ from a subset $D$ of $\lambda$ into $\lambda$ is called a \emph{partial transformation} of $X$. In this case the set $D$ is called the \emph{domain} of $\alpha$ and it is denoted by $\operatorname{dom}\alpha$. The image of an element $x\in\operatorname{dom}\alpha$ under $\alpha$ we shall denote by $x\alpha$  Also, the set $\{ x\in X\colon y\alpha=x \mbox{ for some } y\in Y\}$ is called the \emph{range} of $\alpha$ and is denoted by $\operatorname{ran}\alpha$. The cardinality of $\operatorname{ran}\alpha$ is called the \emph{rank} of $\alpha$ and denoted by $\operatorname{rank}\alpha$. For convenience we denote by $\varnothing$ the empty transformation, that is a partial mapping
with $\operatorname{dom}\varnothing=\operatorname{ran}\varnothing=\varnothing$.

Let $\mathscr{I}_\lambda$ denote the set of all partial one-to-one transformations of $\lambda$ together with the following semigroup operation:
\begin{equation*}
    x(\alpha\beta)=(x\alpha)\beta \quad \mbox{if} \quad
    x\in\operatorname{dom}(\alpha\beta)=\{
    y\in\operatorname{dom}\alpha\colon
    y\alpha\in\operatorname{dom}\beta\}, \qquad \mbox{for} \quad
    \alpha,\beta\in\mathscr{I}_\lambda.
\end{equation*}
The semigroup $\mathscr{I}_\lambda$ is called the \emph{symmetric
inverse semigroup} over the cardinal $\lambda$~(see \cite{Clifford-Preston-1961-1967}). The symmetric
inverse semigroup was introduced by V.~V.~Wagner~\cite{Wagner-1952}
and it plays a major role in the theory of semigroups.

Put
$\mathscr{I}_\lambda^n=\{ \alpha\in\mathscr{I}_\lambda\colon
\operatorname{rank}\alpha\leqslant n\}$,
for $n=1,2,3,\ldots$. Obviously,
$\mathscr{I}_\lambda^n$ ($n=1,2,3,\ldots$) are inverse semigroups,
$\mathscr{I}_\lambda^n$ is an ideal of $\mathscr{I}_\lambda$, for each $n=1,2,3,\ldots$. The semigroup
$\mathscr{I}_\lambda^n$ is called the \emph{symmetric inverse semigroup of
finite transformations of the rank $\leqslant n$} \cite{Gutik-Lawson-Repov-2009, Gutik-Reiter-2009}. The empty partial map $\varnothing\colon \lambda\rightharpoonup\lambda$ we denote by $0$. It is obvious that $0$ is zero of the semigroup $\mathscr{I}_\lambda^n$.

Let $\lambda$ be a non-zero cardinal. On the set
 $
 B_{\lambda}=(\lambda\times\lambda)\cup\{ 0\}
 $,
where $0\notin\lambda\times\lambda$, we define the semigroup
operation ``$\, \cdot\, $'' as follows
\begin{equation*}
(a, b)\cdot(c, d)=
\left\{
  \begin{array}{cl}
    (a, d), & \hbox{ if~ } b=c;\\
    0, & \hbox{ if~ } b\neq c,
  \end{array}
\right.
\end{equation*}
and $(a, b)\cdot 0=0\cdot(a, b)=0\cdot 0=0$ for $a,b,c,d\in
\lambda$. The semigroup $B_{\lambda}$ is called the
\emph{semigroup of $\lambda\times\lambda$-matrix units}~(see
\cite{Clifford-Preston-1961-1967}). Obviously, for any cardinal $\lambda>0$, the semigroup
of $\lambda\times\lambda$-matrix units $B_{\lambda}$ is isomorphic
to $\mathscr{I}_\lambda^1$.

A subset $A$ of a topological space $X$ is called \emph{regular open} if $\operatorname{int}_X(\operatorname{cl}_X(A))=A$.

We recall that a topological space $X$ is said to be
\begin{itemize}
  \item \emph{functionally Hausdorff} if for every pair of distinct points $x_1,x_2\in X$ there exists a continuous function $f\colon X\rightarrow [0,1]$ such that $f(x_1)=0$ and $f(x_2)=1$;
  \item \emph{semiregular} if $X$ has a base consisting of regular open subsets;
  \item \emph{quasiregular} if for any non-empty open set $U\subset X$ there exists a non-empty open set $V\subset U$ such that $\operatorname{cl}_X(V) \subseteq U$;
  \item \emph{compact} if each open cover of $X$ has a finite subcover;
  \item \emph{sequentially compact} if each sequence $\{x_i\}_{i\in\mathbb{N}}$ of $X$ has a convergent subsequence in $X$;
  \item \emph{countably compact} if each open countable cover of $X$ has a finite subcover;
  \item \emph{$H$-closed} if $X$ is a closed subspace of every Hausdorff topological space in which it contained;
  \item \emph{countably compact at a subset} $A\subseteq X$ if every infinite subset $B\subseteq A$  has  an  accumulation  point $x$ in $X$;
  \item \emph{countably pracompact} if there exists a dense subset $A$ in $X$  such that $X$ is countably compact at $A$;
  \item \emph{feebly compact} if each locally finite open cover of $X$ is finite;
  \item \emph{pseudocompact} if $X$ is Tychonoff and each continuous real-valued function on $X$ is bounded.
\end{itemize}
According to Theorem~3.10.22 of \cite{Engelking-1989}, a Tychonoff topological space $X$ is feebly compact if and only if $X$ is pseudocompact. Also, a Hausdorff topological space $X$ is feebly compact if and only if every locally finite family of non-empty open subsets of $X$ is finite.  Every compact space and every sequentially compact space are countably compact, every countably compact space is countably pracompact, and every countably pracompact space is feebly compact (see \cite{Arkhangelskii-1992}), and every $H$-closed space is feebly compact too (see \cite{Gutik-Ravsky-2015a}).

Topological properties of an infinite (semi)topological semigroup $\lambda\times \lambda$-matrix units studied in \cite{Gutik-Pavlyk-2005, Gutik-Pavlyk-2005a, Gutik-Pavlyk-Reiter-2009}. In \cite{Gutik-Pavlyk-2005a} showed that on the infinite semitopological semigroup $\lambda\times \lambda$-matrix units $B_\lambda$ there exists a unique Hausdorff topology $\tau_c$ such that $(B_\lambda,\tau_c)$ is a compact semitopological semigroup and there proved that every pseudocompact Hausdorff topology $\tau$ on $B_\lambda$ such that $(B_\lambda,\tau_c)$ is a semitopological semigroup, is compact. Also, in \cite{Gutik-Pavlyk-2005a} proved that every non-zero element of a Hausdorff semitopological semigroup $\lambda\times \lambda$-matrix units $B_\lambda$ is an isolated point in the topological space $B_\lambda$. In \cite{Gutik-Pavlyk-2005} proved that infinite semigroup $\lambda\times \lambda$-matrix units $B_\lambda$ does not embed into a compact Hausdorff topological semigroup, every Hausdorff topological inverse semigroup $S$ such that contains $B_\lambda$ as a subsemigroup, contains $B_\lambda$ as a closed subsemigroup, i.e., $B_\lambda$ is \emph{algebraically complete} in the class of Hausdorff topological inverse semigroups. This result in \cite{Gutik-Lawson-Repov-2009} was extended onto so called inverse semigroups with \emph{tight ideal series} and as a corollary onto the semigroup $\mathscr{I}_\lambda^n$. Also, in \cite{Gutik-Reiter-2009} there proved that for every positive integer $n$ the semigroup $\mathscr{I}_\lambda^n$ is \emph{algebraically $h$-complete} in the class of Hausdorff topological inverse semigroups, i.e., every homomorphic image of $\mathscr{I}_\lambda^n$ is algebraically complete in the class of Hausdorff topological inverse semigroups. In the paper \cite{Gutik-Reiter-2010} this result was extended onto the class of Hausdorff semitopological inverse semigroups and there it was shown that for an infinite cardinal $\lambda$ the semigroup $\mathscr{I}_\lambda^n$ admits a unique Hausdorff topology $\tau_c$ such that $(\mathscr{I}_\lambda^n,\tau_c)$ is a compact semitopological semigroup. Also there proved the every countably compact Hausdorff topology $\tau$ on $B_\lambda$ such that $(\mathscr{I}_\lambda^n,\tau_c)$ is a semitopological semigroup is compact. In \cite{Gutik-Pavlyk-Reiter-2009} it was shown that a topological semigroup of finite partial bijections $\mathscr{I}_\lambda^n$ with a compact subsemigroup of idempotents is absolutely $H$-closed (i.e., every homomorphic image of $\mathscr{I}_\lambda^n$ is algebraically complete in the class of Hausdorff topological semigroups) and any
countably compact topological semigroup does not contain $\mathscr{I}_\lambda^n$ as a subsemigroup for infinite $\lambda$. In \cite{Gutik-Pavlyk-Reiter-2009} it was given sufficient conditions onto a topological semigroup $\mathscr{I}_\lambda^1$ to be non-$H$-closed. Also in \cite{Gutik-2014} it was proved that an infinite semitopological semigroup of $\lambda\times\lambda$-matrix units $B_\lambda$ is $H$-closed in the class of semitopological semigroups if and only if the space $B_\lambda$ is compact.

For an arbitrary positive integer $n$ and an arbitrary non-zero cardinal $\lambda$ we put
\begin{equation*}
  \exp_n\lambda=\left\{A\subseteq \lambda\colon |A|\leqslant n\right\}.
\end{equation*}

It is obvious that for any positive integer $n$ and any non-zero cardinal $\lambda$ the set $\exp_n\lambda$ with the binary operation $\cap$ is a semilattice. Later in this paper by $\exp_n\lambda$ we shall denote the semilattice $\left(\exp_n\lambda,\cap\right)$. It is easy to see that $\exp_n\lambda$ is isomorphic to the subsemigroup of idempotents (the band) of the semigroup $\mathscr{I}_\lambda^n$ for any positive integer $n$.

In this paper we study feebly compact topologies $\tau$ on the semilattice $\exp_n\lambda$ such that $\left(\exp_n\lambda,\tau\right)$ is a semitopological semilattice. All compact semilattice $T_1$-topologies on $\exp_n\lambda$ are described. We prove that for an arbitrary positive integer $n$ and an arbitrary infinite cardinal $\lambda$ every $T_1$-semitopological countably compact semilattice $\left(\exp_n\lambda,\tau\right)$ is a compact topological semilattice. Also,  we construct a countably pracompact $H$-closed quasiregular non-semiregular topology $\tau_{\operatorname{\textsf{fc}}}^2$ such that $\left(\exp_2\lambda,\tau_{\operatorname{\textsf{fc}}}^2\right)$ is a semitopological semilattice with the discontinuous semilattice operation and show that for an arbitrary positive integer $n$ and an arbitrary infinite cardinal $\lambda$ a semiregular feebly compact semitopological semilattice $\exp_n\lambda$ is a compact topological semilattice.



\bigskip

We recall that a topological space $X$ is said to be
\begin{itemize}
  \item \emph{scattered} if $X$ does not contain a non-empty dense-in-itself subspace;
  \item \emph{hereditarily disconnected} (or \emph{totally disconnected}) if $X$ does not contain any connected subsets of cardinality larger than one.
\end{itemize}

\begin{proposition}\label{proposition-2.1}
Let $n$ be an arbitrary positive integer and $\lambda$ be an arbitrary infinite cardinal. Then for every $T_1$-topology $\tau$ on $\exp_n\lambda$ such that $\left(\exp_n\lambda,\tau\right)$ is a semitopological semilattice the following assertions hold:
\begin{itemize}
  \item[$(i)$] $\left(\exp_n\lambda,\tau\right)$ is a closed subset of any $T_1$-semitopological semilattice $S$ which contains $\exp_n\lambda$ as a subsemilattice;
  \item[$(ii)$] for every $x\in \exp_n\lambda$ there exists an open neighbourhood $U(x)$ of the point $x$ in the space $\left(\exp_n\lambda,\tau\right)$ such that $U(x)\subseteq{\uparrow}x$;
  \item[$(iii)$] ${\uparrow}x$ is a open-and-closed subset of the space $\left(\exp_n\lambda,\tau\right)$ for every $x\in \exp_n\lambda$;
   \item[$(iv)$] the topological space $\left(\exp_n\lambda,\tau\right)$ is functionally Hausdorff and quasiregular, and hence is Hausdorff;
   \item[$(v)$] $\left(\exp_n\lambda,\tau\right)$ is a scattered hereditarily disconnected space.
\end{itemize}
\end{proposition}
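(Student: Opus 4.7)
The plan is to prove (ii) and (iii) from one continuity argument, derive (iv) and (v) as formal consequences, and then settle (i), the hardest part, by a net convergence argument in the ambient semilattice. For (ii) and (iii), fix $x\in \exp_n\lambda$ and consider the map $\phi_x\colon \exp_n\lambda\to \exp_n\lambda$, $\phi_x(z)=z\cap x$, which is continuous by separate continuity of the semilattice operation and whose image lies in the finite power set $\mathcal{P}(x)$. Since $\tau$ is $T_1$, both $\{x\}$ and $\mathcal{P}(x)\setminus\{x\}$ are closed, so $\phi_x^{-1}(\{x\})={\uparrow}x$ is closed and $\phi_x^{-1}(\mathcal{P}(x)\setminus\{x\})=\exp_n\lambda\setminus {\uparrow}x$ is closed; hence ${\uparrow}x$ is clopen, which is (iii), and $U(x)={\uparrow}x$ gives (ii).

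For (iv) and (v), use the clopenness of principal up-sets. Given distinct $A,B\in\exp_n\lambda$, either $A\not\subseteq B$ or $B\not\subseteq A$; in the first case ${\uparrow}A$ is a clopen set containing $A$ but not $B$, and in the second one uses ${\uparrow}B$. The indicator of this clopen set is a continuous $\{0,1\}$-valued function witnessing functional Hausdorffness, and the same clopen separation shows $\exp_n\lambda$ is totally disconnected, hence hereditarily disconnected. For scatteredness: in any non-empty $X\subseteq \exp_n\lambda$, pick $x\in X$ with $|x|$ maximal; then ${\uparrow}x\cap X=\{x\}$, so $x$ is isolated in $X$. Quasiregularity follows because scatteredness combined with $T_1$ forces every non-empty open $U$ to contain a point $x$ isolated in the subspace $U$, which is then automatically isolated in $\exp_n\lambda$; the clopen singleton $V=\{x\}$ satisfies $\overline V=V\subseteq U$.

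The bulk of the work is (i). Let $S$ be a $T_1$-semitopological semilattice containing $\exp_n\lambda$ as a subsemilattice, and suppose a net $(A_\alpha)$ in $\exp_n\lambda$ converges in $S$ to some $s\in S$; we must show $s\in \exp_n\lambda$. By separate continuity, for each $B\in\exp_n\lambda$ we have $A_\alpha\cdot B=A_\alpha\cap B\to s\cdot B$ in $S$. The values $A_\alpha\cap B$ all lie in the finite set $\mathcal{P}(B)$, which is closed in $S$ by $T_1$-ness, so a convergent net in it is eventually constant and equal to $s\cdot B\in \mathcal{P}(B)$. Applying this to singletons, define
\[
L=\{a\in\lambda:s\cdot\{a\}=\{a\}\}.
\]
For each $a\in L$, eventually $a\in A_\alpha$; combining finitely many such ``eventually''s with $|A_\alpha|\le n$ forces $|L|\le n$, so $L\in\exp_n\lambda$. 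For each $a\notin L$, eventually $a\notin A_\alpha$. Running the same argument across the (finitely many) elements of a fixed $A\in\exp_n\lambda$ yields $s\cdot A=A\cap L$ for every $A\in\exp_n\lambda$.

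To conclude $s=L$, specialise this identity to $A=A_\alpha$: then $s\cdot A_\alpha=A_\alpha\cap L$, and since eventually $L\subseteq A_\alpha$, eventually $A_\alpha\cap L=L$, so the net $(s\cdot A_\alpha)$ is eventually constant equal to $L$. On the other hand, separate continuity applied to $A_\alpha\to s$ together with idempotency $s\cdot s=s$ yields $s\cdot A_\alpha\to s$. An eventually constant net in a $T_1$-space converges only to its eventual value, so $s=L\in\exp_n\lambda$, and this proves $\exp_n\lambda$ is closed in $S$. The main obstacle is the conversion of pointwise data $s\cdot\{a\}$ into uniform ``eventually'' statements on the net; this is precisely where the simultaneous use of finiteness of $|A_\alpha|$, finiteness of each range $\mathcal{P}(B)$, the $T_1$-property, and separate continuity is essential.
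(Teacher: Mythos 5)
Your proof is correct, but it takes a genuinely different route from the paper's. The paper proves $(i)$ first, by induction on $n$: assuming $\exp_{k-1}\lambda$ is already algebraically closed, it derives a contradiction from a hypothetical accumulation point $x\in S\setminus\exp_k\lambda$ by playing two neighbourhoods of $x$ against each other via separate continuity; items $(ii)$ and $(iii)$ are then deduced from $(i)$, and $(iv)$, $(v)$ from $(iii)$. You instead obtain $(ii)$ and $(iii)$ independently and at once from the single observation that the translation $\phi_x(z)=z\cap x$ is continuous with finite range $\mathcal{P}(x)$, so that both $\phi_x^{-1}(\{x\})={\uparrow}x$ and $\phi_x^{-1}\left(\mathcal{P}(x)\setminus\{x\}\right)=\exp_n\lambda\setminus{\uparrow}x$ are closed; and you prove $(i)$ by a non-inductive net argument whose engine is the elementary lemma that a net with values in a finite subset of a $T_1$-space which converges to $p$ must eventually equal $p$. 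That lemma converts separate continuity into combinatorial ``eventually $a\in A_\alpha$'' statements, lets the bound $|A_\alpha|\leqslant n$ act in one stroke to give $|L|\leqslant n$, and identifies the limit explicitly as $L=\{a\colon s\cdot\{a\}=\{a\}\}$ via $s\cdot A_\alpha\to s\cdot s=s$ together with the eventual constancy $s\cdot A_\alpha=A_\alpha\cap L=L$. All the ingredients you rely on check out: finite sets are closed in $T_1$-spaces, $s$ is idempotent because the ambient $S$ is a semilattice, the net characterisation of closedness needs no separation axioms, and an eventually constant net in a $T_1$-space has a unique limit. What your route buys is brevity, the decoupling of $(ii)$--$(iii)$ from $(i)$, and an explicit description of limit points; what the paper's inductive scheme buys is that the same induction-on-$n$ template is reused almost verbatim in the subsequent compactness theorems, so the authors' longer proof doubles as a warm-up for machinery they need later.
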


\begin{proof}
$(i)$ We shall prove our assertion by induction.

Let $n=1$ and let $S$ be an arbitrary $T_1$-se\-mi\-to\-po\-lo\-gi\-cal semilattice which contains $\exp_1\lambda$ as a proper subsemilattice. We fix an arbitrary element $x\in S\setminus\exp_1\lambda$. Suppose to the contrary that every open neighbourhood $U(x)$ of the point $x$ in the topological space $S$ intersects the semilattice $\exp_1\lambda$. First we shall show that $ex=0$ for any $e\in\exp_1\lambda$, where $0$ is zero of the semilattice $\exp_1\lambda$. Suppose to the contrary that there exists $e\in\exp_1\lambda$ such that $ex=y\neq 0$. Since $S$ is a $T_1$-space there exists an open neighbourhood $U(y)$ of the point $y$ in $S$ such that $0\notin U(y)$. Then by the definition the semilattice operation of $\exp_1\lambda$ and by separate continuity of the semilattice operation of $S$ we have that $0\in e\cdot V(x)\subseteq U(y)$ for every open neighbourhood $V(x)$ of the point $x$ in $S$, because the neighbourhood $V(x)$ contains infinitely many points from the semilattice $\exp_1\lambda$. This contradicts the choice of the neighbourhood $U(y)$. The obtained contradiction implies that $ex=0$ for any $e\in\exp_1\lambda$. Fix an arbitrary open neighbourhood $U(x)$ of $x$ in $S$ such that $0\notin U(x)$. Then by the separate continuity of the semilattice operation of $S$ we get that there exists an open neighbourhood $V(x)$ of $x$ in $S$ such that $x\cdot V(x)\subseteq U(x)$. Since $V(x)$ intersects the semilattice $\exp_1\lambda$, the above arguments imply that $0\in x\cdot V(x)$, a contradiction. Therefore, $\exp_1\lambda$ is a closed sub\-semi\-lattice of $S$.

Suppose that for every $j<k$ the semilattice $\exp_j\lambda$ is a closed subsemilattice of any $T_1$-se\-mi\-to\-po\-lo\-gi\-cal semilattice which contains $\exp_j\lambda$ as a proper subsemilattice, where $k\leqslant n$. We shall prove that this implies that $\exp_k\lambda$ is a closed subsemilattice of any $T_1$-se\-mi\-to\-po\-lo\-gi\-cal semilattice which contains $\exp_k\lambda$ as a proper subsemilattice. Suppose to the contrary that there exists a $T_1$-se\-mi\-to\-po\-lo\-gi\-cal semilattice $S$ which contains $\exp_k\lambda$ as a non-closed subsemilattice. Then there exists an element $x\in S\setminus\exp_k\lambda$ such that every open neighbourhood $U(x)$ of the point $x$ in the topological space $S$ intersects the semilattice $\exp_k\lambda$. The assumption of induction implies that there exists an open neighbourhood $U(x)$ of the point $x$ in $S$ such that $U(x)\cap \exp_k\lambda\subseteq \exp_k\lambda\setminus \exp_{k-1}\lambda$. Now, as in the case of the semilattice $\exp_1\lambda$ the separate continuity of the semilattice operation of $S$ implies that $e\cdot x\in \exp_{k-1}\lambda$ for any $e\in \exp_k\lambda\setminus \exp_{k-1}\lambda$. Indeed, suppose to the contrary that there exists $e\in \exp_k\lambda\setminus \exp_{k-1}\lambda$ such that $e\cdot x=z\notin \exp_{k-1}\lambda$. Then the assumption of induction implies that $\exp_{k-1}\lambda$ is a closed subsemilattice of $S$ and hence there exists an open neighbourhood $U(y)$ of the point $y$ in $S$ such that $U(y)\cap\exp_{k-1}\lambda=\varnothing$. Now, by the separate continuity of the semilattice operation of $S$ there exists an open neighbourhood $U(x)$ of the point $x$ in $S$ such that $e\cdot U(x)\subseteq U(y)$. Then the semilattice operation of $\exp_k\lambda$ implies that $\left(e\cdot U(x)\right)\cap\exp_{k-1}\lambda\neq\varnothing$, which contradicts the choice of the neighbourhood $U(y)$.

Fix an arbitrary open neighbourhood $U(x)$ of $x$ in $S$ such that $U(x)\cap \exp_k\lambda\subseteq \exp_k\lambda\setminus \exp_{k-1}\lambda$. Then by the separate continuity of the semilattice operation of $S$ we get that there exists an open neighbourhood $V(x)\subseteq U(x)$ of $x$ in $S$ such that $x\cdot V(x)\subseteq U(x)$. By our assumption we have that the set $V(x)\cap\exp_k\lambda\setminus \exp_{k-1}\lambda$ is infinite and hence the above part of our proof implies that $\left(x\cdot V(x)\right)\cap \exp_{k-1}\lambda\neq\varnothing$, which contradicts the choice of the neighbourhood $U(x)$. The obtained contradiction implies that $\exp_k\lambda$ is a closed subset of $S$, which completes the proof of our assertion.

$(ii)$ In the case when $x=0$ the statement is trivial, and hence we assume that $x\neq 0$. Then the definition of the semilattice $\exp_n\lambda$ implies that there exists the minimum positive integer $k$ such that $x\in\exp_k\lambda$ and $x\notin\exp_{k-1}\lambda$. By item $(i)$ there exists an open neighbourhood $U(x)$ of the point $x$ in the space $\left(\exp_n\lambda,\tau\right)$ such that $U(x)\subseteq \exp_n\lambda\setminus\exp_{k-1}\lambda$. Then the separate continuity of the semilattice operation in $\left(\exp_n\lambda,\tau\right)$ implies that there exists an open neighbourhood $V(x)\subseteq U(x)$ such that $x\cdot V(x)\subseteq U(x)$. If $V(x)\nsubseteq{\uparrow}x$ then by the definition of the semilattice operation on $\exp_n\lambda$ we have that there exists $y\in V(x)$ such that $xy\in \exp_{k-1}\lambda$, a contradiction. Hence we get that $V(x)\subseteq{\uparrow}x$.

$(iii)$ Since a topological space is $T_1$-space if and only if every its point is a closed subset of itself, the separate continuity of the semilattice operation implies that ${\uparrow}x$ is a closed subset of $\left(\exp_n\lambda,\tau\right)$ for any $x\in \exp_n\lambda$. Also, item $(ii)$ implies that
\begin{equation*}
  {\uparrow}x=\bigcup\left\{V(y)\colon y\in{\uparrow}x \hbox{~and~} V(y) \hbox{~is an open neighbourhood} \hbox{~of~} y \hbox{~such that~} V(y)\subseteq {\uparrow}y\right\}
\end{equation*}
is an open subset of $\left(\exp_n\lambda,\tau\right)$ for any $x\in \exp_n\lambda$.

$(iv)$ Fix arbitrary distinct elements $x_1$ and $x_2$ of the semitopological semilattice $\left(\exp_n\lambda,\tau\right)$. Then we have either $x_1\notin{\uparrow}x_2$ or $x_2\notin{\uparrow}x_1$. In the case when $x_1\notin{\uparrow}x_2$ we define the map $f\colon \left(\exp_n\lambda,\tau\right)\to[0,1]$ by the formula
\begin{equation*}
  f(x)=
  \left\{
    \begin{array}{ll}
      1, & \hbox{if~} x\in{\uparrow}x_2; \\
      0, & \hbox{if~} x\notin{\uparrow}x_2.
    \end{array}
  \right.
\end{equation*}
Then we have that $f(x_1)=0$ and $f(x_2)=1$ and by item $(iii)$ ${\uparrow}x_2$ is an open-and-closed subset of the space $\left(\exp_n\lambda,\tau\right)$, and hence so defined map $f\colon \left(\exp_n\lambda,\tau\right)\to[0,1]$ is continuous.

The definition of the semilattice $\exp_n\lambda$ implies that every non-empty open subset of $\left(\exp_n\lambda,\tau\right)$ has a maximal element $x$ with the respect to the natural partial order on $\exp_n\lambda$. Then by item $(iii)$, ${\uparrow}x$ is an open-and-closed subset of $\left(\exp_n\lambda,\tau\right)$, and hence $x$ is an isolated point of $\left(\exp_n\lambda,\tau\right)$. Since $\tau$ is a $T_1$-topology, $\operatorname{cl}_{\exp_n\lambda}(\{x\})=\{x\}\subseteq U$, which implies that $\left(\exp_n\lambda,\tau\right)$ is a quasiregular space.

$(v)$ We shall prove that every non-empty subset $A$ of $\left(\exp_n\lambda,\tau\right)$ has an isolated point in itself. Fix an arbitrary non-empty subset $A$ of $\left(\exp_n\lambda,\tau\right)$. If $A\cap\exp_n\lambda\setminus\exp_{n-1}\lambda\neq\varnothing$ then by item $(ii)$ every point $x\in A\cap\exp_n\lambda\setminus\exp_{n-1}\lambda$ is isolated in $\left(\exp_n\lambda,\tau\right)$ and hence $x$ is an isolated point of $A$. In the other case there exists a positive integer $k<n$ such that $A\subseteq \exp_k\lambda$ and $A\nsubseteq \exp_{k-1}\lambda$. Then by item $(ii)$ every point $x\in A\cap\exp_k\lambda\setminus\exp_{k-1}\lambda$ is isolated in $A$.

The hereditary disconnectedness of the space $\left(\exp_n\lambda,\tau\right)$ follows from item $(iii)$. Indeed, if $x \nleqslant y$ in $\exp_n\lambda$ then by item $(iii)$, ${\uparrow}x$ is an open-and-closed neighbourhood of $x$ in $\left(\exp_n\lambda,\tau\right)$ such that $y\notin{\uparrow}x$. This implies that the space $\left(\exp_n\lambda,\tau\right)$ does not contain any connected subsets of cardinality larger than one.
\end{proof}

Recall \cite{Gutik-2014} an algebraic semilattice $S$ is called \emph{algebraically complete} in the class $\mathfrak{STSL}$ of semitopological semilattices if $S$ is a closed subsemilattice of every semitopological semilattice $L\in\mathfrak{STSL}$ which contains $S$ as a subsemilattice.

Proposition~\ref{proposition-2.1}$(i)$ implies the following corollary.

\begin{corollary}\label{corollary-2.2}
Let $n$ be an arbitrary positive integer and $\lambda$ be an arbitrary infinite cardinal. Then the semilattice $\exp_n\lambda$ is algebraically complete in the class of $T_1$-semitopological semilattices.
\end{corollary}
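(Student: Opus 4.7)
The plan is to deduce the corollary directly from Proposition~\ref{proposition-2.1}$(i)$, since the definition of algebraic completeness in the class of $T_1$-semitopological semilattices is essentially the conclusion of that item. Concretely, I would start by fixing an arbitrary $T_1$-semitopological semilattice $L$ that contains $\exp_n\lambda$ as a subsemilattice, and equipping $\exp_n\lambda$ with the subspace topology $\tau$ inherited from $L$.

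The next step is to verify that this pair $(\exp_n\lambda,\tau)$ satisfies the hypothesis of Proposition~\ref{proposition-2.1}$(i)$. Being $T_1$ is hereditary, so $\tau$ is a $T_1$-topology on $\exp_n\lambda$; and since the semilattice operation on $L$ is separately continuous and $\exp_n\lambda$ is a subsemilattice, its restriction is also separately continuous, so $(\exp_n\lambda,\tau)$ is a $T_1$-semitopological semilattice. Now I would invoke Proposition~\ref{proposition-2.1}$(i)$ with $S=L$, obtaining that $\exp_n\lambda$ is a closed subset of $L$. By the definition recalled immediately before the corollary this is exactly the assertion that $\exp_n\lambda$ is algebraically complete in the class of $T_1$-semitopological semilattices, and since $L$ was arbitrary the proof is finished.

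There is no serious obstacle here: the entire content sits in Proposition~\ref{proposition-2.1}$(i)$, and the corollary is essentially a repackaging. The only (routine) point that one should not omit is the verification that passing to the subspace topology preserves both the $T_1$ separation axiom and the separate continuity of the semilattice operation, which licenses the application of the proposition.
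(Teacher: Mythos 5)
Your proposal is correct and matches the paper's intent exactly: the paper derives the corollary directly from Proposition~\ref{proposition-2.1}$(i)$ without further comment, and your spelled-out verification that the subspace topology inherited from $L$ is $T_1$ and makes the semilattice operation separately continuous is precisely the routine step being left implicit there.
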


The following example shows that the statement $(iv)$ of Proposition~\ref{proposition-2.1} does not hold in the case when $\left(\exp_n\lambda,\tau\right)$ is a $T_0$-space.

\begin{example}\label{example-2.3}
For an arbitrary positive integer $n$ and an arbitrary infinite cardinal $\lambda$ we define a topology $\tau_0$ on $\exp_n\lambda$ in the following way:
\begin{itemize}
  \item[$(i)$] all non-zero elements of the semilattice $\exp_n\lambda$ are isolated points in $\left(\exp_n\lambda,\tau_0\right)$; \; and
  \item[$(ii)$] $\exp_n\lambda$ is the unique open neighbourhood of zero in $\left(\exp_n\lambda,\tau_0\right)$.
\end{itemize}
Simple verifications show that the semilattice operation on $\left(\exp_n\lambda,\tau_0\right)$ is continuous.
\end{example}

\begin{example}\label{example-2.4}
For an arbitrary positive integer $n$ and an arbitrary infinite cardinal $\lambda$ we define a topology $\tau_{\operatorname{\textsf{c}}}^n$ on $\exp_n\lambda$ in the following way: the family $\left\{\mathscr{B}^n_{\operatorname{\textsf{c}}}(x)\colon x\in \exp_n\lambda\right\}$, where
\begin{equation*}
\mathscr{B}^n_{\operatorname{\textsf{c}}}(x)=\left\{U_x(x_1,\ldots,x_j)={\uparrow}x\setminus\left({\uparrow}x_1\cup\cdots\cup{\uparrow}x_j\right)\colon x_1,\ldots,x_j\in{\uparrow}x\setminus\{x\}\right\},
\end{equation*}
forms a neighbourhood system for the topological space $\left(\exp_n\lambda,\tau_{\operatorname{\textsf{c}}}^n\right)$. Simple verifications show that the family $\left\{\mathscr{B}^n_{\operatorname{\textsf{c}}}(x)\colon x\in \exp_n\lambda\right\}$ satisfies the properties \textbf{(BP1)--(BP3)} of \cite{Engelking-1989}. Also, it is obvious that the family $\left\{\mathscr{B}^n_{\operatorname{\textsf{c}}}(x)\colon x\in \exp_n\lambda\right\}$ satisfies the property \textbf{(BP4)} of \cite[Proposition~1.5.2]{Engelking-1989}, and hence the topological space $\left(\exp_n\lambda,\tau_{\operatorname{\textsf{c}}}^n\right)$ is Hausdorff.
\end{example}

Recall \cite{Engelking-1989} a topological space $X$ is called \emph{$0$-dimensional} if $X$ has a base which consists of open-and-closed subsets of $X$.

\begin{proposition}\label{proposition-2.5}
Let $n$ be an arbitrary positive integer and $\lambda$ be an arbitrary infinite cardinal. Then \emph{$\left(\exp_n\lambda,\tau_{\operatorname{\textsf{c}}}^n\right)$} is a compact $0$-dimensional topological semilattice.
\end{proposition}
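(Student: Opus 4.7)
Example~\ref{example-2.4} already shows that $(\exp_n\lambda,\tau_{\operatorname{\textsf{c}}}^n)$ is Hausdorff, so it remains to verify three things: $0$-dimensionality, joint continuity of $\cap$, and compactness. For $0$-dimensionality, I plan to show directly that every principal filter ${\uparrow}x$ is open-and-closed, which then makes each basic neighbourhood $U_x(x_1,\ldots,x_j)={\uparrow}x\cap\bigcap_{i=1}^{j}(\exp_n\lambda\setminus{\uparrow}x_i)$ a finite Boolean combination of open-and-closed sets. Openness of ${\uparrow}x$ is automatic, since ${\uparrow}x$ is itself a basic neighbourhood with empty exclusion list. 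For closedness, given $y\notin{\uparrow}x$, I would split into two cases: if $|x\cup y|>n$ then ${\uparrow}y$ itself is a basic neighbourhood of $y$ disjoint from ${\uparrow}x$, because no member of $\exp_n\lambda$ contains both $x$ and $y$; otherwise $x\cup y\in{\uparrow}y\setminus\{y\}$, and the basic neighbourhood $U_y(x\cup y)={\uparrow}y\setminus{\uparrow}(x\cup y)$ avoids ${\uparrow}x$, since every element of $\exp_n\lambda$ containing both $x$ and $y$ contains their union.

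For joint continuity at $(x,y)$ with $z=x\cap y$, fix any basic neighbourhood $U_z(z_1,\ldots,z_j)$. Each $z_i\supsetneq z$ cannot simultaneously satisfy $z_i\subseteq x$ and $z_i\subseteq y$ (otherwise $z_i\subseteq x\cap y=z$), so $\{1,\ldots,j\}=I_x\cup I_y$ where $I_x=\{i:z_i\nsubseteq x\}$ and $I_y=\{i:z_i\nsubseteq y\}$. I plan to define
\begin{equation*}
V={\uparrow}x\setminus\bigcup\bigl\{{\uparrow}(x\cup z_i):i\in I_x,\;|x\cup z_i|\leqslant n\bigr\}
\end{equation*}
and $W$ symmetrically from $y$; these are genuine basic neighbourhoods of $x$ and $y$ because each $x\cup z_i$ with $|x\cup z_i|\leqslant n$ strictly contains $x$. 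Then for $v\in V$ and $w\in W$ one has $v\cap w\supseteq z$, and if some $z_i$ were contained in $v\cap w$ with, without loss of generality, $i\in I_x$, then $v\supseteq x\cup z_i$ would either contradict $v\in V$ (when $|x\cup z_i|\leqslant n$) or force $|v|>n$, a contradiction in either case.

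For compactness I would proceed by induction on $n$. The case $n=1$ reduces to the one-point compactification of a discrete set: every singleton is isolated, and every basic neighbourhood of $\varnothing$ omits only finitely many singletons, so any open cover has a finite subcover. For the inductive step, refine an open cover to basic sets, pick one $U_\varnothing(z_1,\ldots,z_j)=\exp_n\lambda\setminus\bigcup_i{\uparrow}z_i$ covering $\varnothing$, and cover each ${\uparrow}z_i$ separately. Since $z_i\neq\varnothing$, the map $B\mapsto B\cup z_i$ defines a homeomorphism from $(\exp_{n-|z_i|}(\lambda\setminus z_i),\tau_{\operatorname{\textsf{c}}}^{n-|z_i|})$ onto ${\uparrow}z_i$ with its subspace topology, and because $|z_i|$ is finite while $\lambda$ is infinite, $|\lambda\setminus z_i|=\lambda$, so the inductive hypothesis makes ${\uparrow}z_i$ compact. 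Combining finite subcovers of each ${\uparrow}z_i$ with the single set $U_\varnothing(z_1,\ldots,z_j)$ yields the required finite subcover. I expect the main obstacle to be the joint continuity step, specifically the bookkeeping needed to ensure that the excluded sets $x\cup z_i$ and $y\cup z_i$ defining $V$ and $W$ are themselves legitimate elements of $\exp_n\lambda$, which is what forces the case split on whether $|x\cup z_i|\leqslant n$.
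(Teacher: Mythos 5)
Your proof is correct, and the compactness and $0$-dimensionality arguments essentially coincide with the paper's: the same induction on $n$, isolating a basic neighbourhood of $\varnothing$ and covering the finitely many principal filters ${\uparrow}z_i$, each compact by the inductive hypothesis via the isomorphism ${\uparrow}z_i\cong\exp_{n-|z_i|}\lambda$ (for $|z_i|=n$ the filter is a singleton, which you should note is handled trivially rather than by the inductive hypothesis). Where you genuinely diverge is the continuity step. The paper only verifies \emph{separate} continuity of $\cap$ (by a case analysis on whether $a=b$, $a<b$, or $a,b$ are incomparable) and then upgrades to joint continuity by invoking the theorem that a compact Hausdorff semitopological semilattice is a topological semilattice (\cite[Proposition~VI-1.13]{Gierz-Hofmann-Keimel-Lawson-Mislove-Scott-2003}, \cite{JLawson-1973}). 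You instead prove joint continuity directly: given a basic neighbourhood $U_z(z_1,\ldots,z_j)$ of $z=x\cap y$, you split the excluded sets into those not below $x$ and those not below $y$, and exclude the corresponding unions $x\cup z_i$ (when they lie in $\exp_n\lambda$) from the neighbourhoods of $x$ and $y$; the cardinality bound $|v|\leqslant n$ disposes of the remaining case. This argument is sound --- the key observation that no $z_i\supsetneq z$ can be contained in both $x$ and $y$ is exactly what makes $I_x\cup I_y=\{1,\ldots,j\}$ --- and it buys a self-contained, elementary proof that does not depend on compactness or on the Lawson-type theorem, so it would in fact establish joint continuity of $\tau_{\operatorname{\textsf{c}}}^n$ even before compactness is known. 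The paper's route is shorter on the page but leans on a nontrivial external result; yours is longer but purely combinatorial.
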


\begin{proof}
The definition of the family $\left\{\mathscr{B}^n_{\operatorname{\textsf{c}}}(x)\colon x\in \exp_n\lambda\right\}$ implies that for arbitrary $x\in \exp_n\lambda$ the set ${\uparrow}x$ is open-and-closed in $\left(\exp_n\lambda,\tau_{\operatorname{\textsf{c}}}^n\right)$, and hence $\left\{\mathscr{B}^n_{\operatorname{\textsf{c}}}(x)\colon x\in \exp_n\lambda\right\}$ is the base of the topological space $\left(\exp_n\lambda,\tau_{\operatorname{\textsf{c}}}^n\right)$ which consists of open-and-closed subsets.

Now, by induction we shall show that the space $\left(\exp_n\lambda,\tau_{\operatorname{\textsf{c}}}^n\right)$ is compact. In the case when $n=1$ the compactness of $\left(\exp_1\lambda,\tau_{\operatorname{\textsf{c}}}^1\right)$ follows from the definition of the family $\left\{\mathscr{B}^1_{\operatorname{\textsf{c}}}(x)\colon x\in \exp_1\lambda\right\}$. Next, we shall prove that the statement the space $\left(\exp_i\lambda,\tau_{\operatorname{\textsf{c}}}^i\right)$ is compact for all positive integers $i<k\leqslant n$ implies that the space $\left(\exp_{k}\lambda,\tau_{\operatorname{\textsf{c}}}^{k}\right)$ is compact too. Fix an arbitrary open cover $\mathscr{U}$ of the topological space $\left(\exp_{k}\lambda,\tau_{\operatorname{\textsf{c}}}^{k}\right)$. The definition of the topology $\tau_{\operatorname{\textsf{c}}}^{k}$ implies that there exists an element $U_0\in\mathscr{U}$ such that $0\in U_0$. Then there exists $U_0(x_1,\ldots,x_j)\in\mathscr{B}^k_{\operatorname{\textsf{c}}}(0)$ such that $U_0(x_1,\ldots,x_j)\subseteq U_0$. The definition of the semilattice $\exp_n\lambda$ implies that for any $x_1,\ldots,x_j\in\exp_k\lambda$ the subsemilattices ${\uparrow}x_1,\ldots,{\uparrow}x_j$ of $\exp_k\lambda$ are isomorphic to the semilattices $\exp_{i_1}\lambda,\ldots,\exp_{i_j}\lambda$, respectively, for some non-negative integers $i_1,\ldots,i_j<k$. This, the definition of the topology $\tau_{\operatorname{\textsf{c}}}^{k}$ and the assumption of induction imply that ${\uparrow}x_1,\ldots,{\uparrow}x_j$ are compact subsets of $\left(\exp_{k}\lambda,\tau_{\operatorname{\textsf{c}}}^{k}\right)$. Then there exist finitely many $U_1,\ldots, U_m\in\mathscr{U}$ such that ${\uparrow}x_1\cup\cdots\cup{\uparrow}x_j\subseteq U_1\cup\cdots\cup U_m$, and hence $\left\{U_0,U_1,\ldots, U_m\right\}\subseteq\mathscr{U}$ is a finite subcover of the topological space $\left(\exp_{k}\lambda,\tau_{\operatorname{\textsf{c}}}^{k}\right)$.

Since in a Hausdorff compact semitopological semilattice the semilattice operation is continuous (see \cite[Proposition~VI-1.13]{Gierz-Hofmann-Keimel-Lawson-Mislove-Scott-2003} or \cite[p.~242, Theorem~6.6]{JLawson-1973}), it is sufficient to show that the semilattice operation in $\left(\exp_n\lambda,\tau_{\operatorname{\textsf{c}}}^n\right)$ is separately continuous.

Let $a$ and $b$ are arbitrary elements of the semilattice $\exp_n\lambda$. We consider the following three cases:
\begin{equation*}
  \hbox{\textbf{(I)}}~a=b; \qquad \hbox{\textbf{(II)}}~a<b; \qquad \hbox{and} \qquad \hbox{\textbf{(III)}}~a \hbox{~and~} b \hbox{~are~incomparable}.
\end{equation*}

In case \textbf{(I)} we have that $a\cdot U_a(x_1,\ldots,x_k)=\{a\}\subseteq U_a(x_1,\ldots,x_k)$ for any $U_a(x_1,\ldots,x_k)\in \mathscr{B}^n_{\operatorname{\textsf{c}}}(a)$.

In case \textbf{(II)} we get that $a\cdot U_b(b_1,\ldots,b_l)=\{a\}\subseteq U_a(x_1,\ldots,x_k)$ and $U_a(x_1,\ldots,x_k)\cdot b\subseteq U_a(x_1,\ldots,x_k)$ for any $U_a(x_1,\ldots,x_k)\in \mathscr{B}^n_{\operatorname{\textsf{c}}}(a)$ and $U_b(b_1,\ldots,b_l)\in\mathscr{B}^n_{\operatorname{\textsf{c}}}(b)$, because if $a\subseteq x\subseteq y$ and $a\subseteq b$ in $\exp_n\lambda$, then $a\subseteq x\cap b\subseteq y$.

In case \textbf{(III)} we consider two possible subcases: ${\uparrow}a\cap {\uparrow}b=\varnothing$ and ${\uparrow}a\cap {\uparrow}b\neq\varnothing$. Put $d=ab=a\cap b$. If ${\uparrow}a\cap {\uparrow}b=\varnothing$ then $a\cdot U_b(b_1,\ldots,b_l)=\{d\}\subseteq U_d(z_1,\ldots,z_k)$ and $U_a(x_1,\ldots,x_k)\cdot b\subseteq U_d(z_1,\ldots,z_k)$ for any $U_a(x_1,\ldots,x_k)\in \mathscr{B}^n_{\operatorname{\textsf{c}}}(a)$, $U_b(b_1,\ldots,b_l)\in\mathscr{B}^n_{\operatorname{\textsf{c}}}(b)$ and $U_d(z_1,\ldots,z_k)\in\mathscr{B}^n_{\operatorname{\textsf{c}}}(d)$, because in this subcase we have that ${\uparrow}a\cdot{\uparrow}b=d$. If ${\uparrow}a\cap {\uparrow}b\neq\varnothing$ then similar arguments as in the above case imply that $a\cdot U_b(b_1,\ldots,b_l,u)=\{d\}\subseteq U_d(z_1,\ldots,z_k)$ and $U_a(x_1,\ldots,x_k,u)\cdot b\subseteq U_d(z_1,\ldots,z_k)$ for any $U_a(x_1,\ldots,x_k,u)\in \mathscr{B}^n_{\operatorname{\textsf{c}}}(a)$, $U_b(b_1,\ldots,b_l,u)\in\mathscr{B}^n_{\operatorname{\textsf{c}}}(b)$ and $U_d(z_1,\ldots,z_k)\in\mathscr{B}^n_{\operatorname{\textsf{c}}}(d)$, where $u=a\cup b$ in $\exp_n\lambda$.

This completes the proof of our proposition.
\end{proof}

\begin{remark}
By Proposition~\ref{proposition-2.1}$(v)$ the topological space \emph{$\left(\exp_n\lambda,\tau_{\operatorname{\textsf{c}}}^n\right)$} is scattered. Since every countably compact scattered $T_3$-space is sequentially compact (see \cite[Theorem~5.7]{Vaughan-1984}), \emph{$\left(\exp_n\lambda,\tau_{\operatorname{\textsf{c}}}^n\right)$} is a sequentially compact space.
\end{remark}

\begin{theorem}\label{theorem-2.6}
Let $n$ be an arbitrary positive integer and $\lambda$ be an arbitrary infinite cardinal. Then for any $T_1$-topology $\tau$ on $\exp_n\lambda$ the following conditions are equivalent:
\begin{itemize}
  \item[$(i)$] $\left(\exp_n\lambda,\tau\right)$ is a compact topological semilattice;
  \item[$(ii)$] \emph{$\tau=\tau_{\operatorname{\textsf{c}}}^n$};
  \item[$(iii)$] $\left(\exp_n\lambda,\tau\right)$ is a countably compact topological semilattice;
  \item[$(iv)$] $\left(\exp_n\lambda,\tau\right)$ is a feebly compact topological semilattice;
  \item[$(v)$] $\left(\exp_n\lambda,\tau\right)$ is a compact semitopological semilattice;
  \item[$(vi)$] $\left(\exp_n\lambda,\tau\right)$ is a countably compact semitopological semilattice.
\end{itemize}
\end{theorem}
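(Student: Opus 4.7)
I will close the cycle of six implications, dispensing with the routine ones and reserving the bulk of the work for (vi) $\Rightarrow$ (ii) and (iv) $\Rightarrow$ (ii). The routine links are (ii) $\Rightarrow$ (i) from Proposition~\ref{proposition-2.5}; (i) $\Rightarrow$ (iii) $\Rightarrow$ (iv) and (i) $\Rightarrow$ (v) $\Rightarrow$ (vi) from the standard hierarchies compact $\Rightarrow$ countably compact $\Rightarrow$ feebly compact and topological $\Rightarrow$ semitopological; and (v) $\Rightarrow$ (i) from Proposition~\ref{proposition-2.1}$(iv)$ (Hausdorffness) together with the cited fact (already invoked in Proposition~\ref{proposition-2.5}) that a compact Hausdorff semitopological semilattice is automatically topological.

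Both substantive implications will proceed by induction on $n$. For $n=1$, Proposition~\ref{proposition-2.1}$(iii)$ makes every non-zero element isolated (since ${\uparrow}x=\{x\}$), so if a neighbourhood $U$ of $0$ had infinite complement $F$, then under (vi) the infinite discrete set $F$ would have no accumulation point (impossible by countable compactness) and under (iv) the family $\{\{y\}:y\in F\}$ would be an infinite locally finite family of non-empty open sets (impossible by feeble compactness); either way $\tau=\tau_{\operatorname{\textsf{c}}}^1$. For $n>1$, I would first apply the induction hypothesis to each principal filter: for $x\neq 0$, Proposition~\ref{proposition-2.1}$(iii)$ makes ${\uparrow}x$ open-and-closed, so it inherits countable or feeble compactness, and as ${\uparrow}x\cong\exp_{n-|x|}\lambda$ the induction hypothesis forces $\tau|_{{\uparrow}x}=\tau_{\operatorname{\textsf{c}}}^{n-|x|}$, a compact topology. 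The remaining task is to show that for every open $U\ni 0$ the complement $F=\exp_n\lambda\setminus U$ lies in a finite union of principal filters; if not, iteratively pick $y_i\in F\setminus\bigl(\bigcup_{j<i}({\uparrow}y_j\cup{\downarrow}y_j)\bigr)$ (non-empty since the ${\downarrow}y_j$'s are finite and no finite union of principal upsets covers $F$), and refine by pigeonhole on rank to get an infinite antichain $\{y_i\}\subseteq F$ of common rank $r\geq 1$.

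Under (vi) I would run an inner induction on $k=1,\ldots,n$ showing that $F\cap\exp_k\lambda$ is covered by finitely many principal filters generated by elements of rank $\leq k$. At each step the uncovered remainder $G_{k+1}'=F\cap(\exp_{k+1}\lambda\setminus\exp_k\lambda)\setminus\bigcup{\uparrow}y_j$ is shown to be finite by extracting an accumulation point $z^*$ from countable compactness and ruling out every possible rank of $z^*$: the case $z^*=0$ is excluded by $U\cap F=\varnothing$; $|z^*|\leq k$ places $z^*$ in the already-covered part, so ${\uparrow}z^*\cap G_{k+1}'=\varnothing$; $|z^*|=k+1$ forces ${\uparrow}z^*\cap G_{k+1}'\subseteq\{z^*\}$ by a rank count; and $|z^*|\geq k+2$ makes the intersection empty. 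Under (iv) the same case analysis succeeds whenever $G_{k+1}'$ has an accumulation point; when it does not, $G_{k+1}'$ is closed discrete, and for $k+1=n$ its elements are isolated, so $\{\{z\}:z\in G_{k+1}'\}$ becomes an infinite locally finite family of non-empty open sets, contradicting feeble compactness.

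The delicate case under (iv) is $k+1<n$, where the elements of $G_{k+1}'$ are not isolated. Here I would apply the $\Delta$-system lemma to the antichain $\{y_i\}$ to reduce to an antichain of pairwise disjoint rank-$r$ elements (the case of a non-trivial root $d$ is absorbed into the compact subspace ${\uparrow}d$ and reduces to the (vi) argument), extend each $y_i$ to a pairwise disjoint rank-$n$ element $w_i\supseteq y_i$, apply the $r=n$ subcase to $\{w_i\}$ (whose family of isolated singletons is locally finite at every $z\neq 0$ by pairwise disjointness, forcing $0$ to be an accumulation point of $\{w_i\}$), and finally use joint continuity of the semilattice operation together with the identity $w_i\cdot y_i=y_i$ to translate this into a contradiction with $y_i\in F$. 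Making this last transfer close cleanly---converting a neighbourhood of $0$ containing many $w_i$'s into one forcing some $y_i$ into $U$---is the step I expect to be the main obstacle.
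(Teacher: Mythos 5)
Your architecture (closing the cycle through (vi)\,$\Rightarrow$\,(ii) and (iv)\,$\Rightarrow$\,(ii) instead of the paper's (vi)\,$\Rightarrow$\,(v) and (iv)\,$\Rightarrow$\,(iii)) is legitimate, and your (vi) branch is essentially correct: the inner induction on the rank $k$, the use of $\omega$-accumulation points, and the rank-by-rank elimination of candidate accumulation points all go through (one repair: your extraction $y_i\in F\setminus\bigcup_{j<i}({\uparrow}y_j\cup{\downarrow}y_j)$ only yields an antichain, whose $\Delta$-system root $d$ need not be empty; since $\{y_i\}\subseteq{\uparrow}d$ then sits inside a single compact principal filter, it no longer witnesses that $F$ is uncovered. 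You should instead pick $y_{i+1}\in F\setminus\bigcup_{j\leqslant i}\bigcup_{a\in y_j}{\uparrow}\{a\}$, which is possible precisely because $F$ is not covered by finitely many principal filters and which makes the $y_i$ pairwise disjoint from the outset). This is a genuinely different decomposition from the paper, which instead inducts on the closed ideals $\exp_{k-1}\lambda$, using that a closed subspace of a countably compact space is countably compact and hence, by induction, compact.

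The real problem is the (iv)\,$\Rightarrow$\,(ii) branch, and the gap you flag is not a technicality --- your transfer mechanism points in the wrong direction and cannot be closed as stated. Knowing that every neighbourhood of $0$ contains infinitely many $w_i$ and (for a second family of extensions $w_i'$ with $w_i\cap w_i'=y_i$) infinitely many $w_j'$ gives no common index $i$ with both $w_i,w_i'\in V(0)$: the two infinite index sets may be disjoint, so you never manufacture $y_i=w_i\cdot w_i'\in V(0)\cdot V(0)\subseteq U$. The transfer that works --- and it is the one hidden in the paper's proof --- goes \emph{up} in rank, not down. Take $V(0)$ with $V(0)\cdot V(0)\subseteq U(0)$ and an infinite pairwise disjoint set $A=\{y_j\}$ of rank $r<n$ with $U(0)\cap A=\varnothing$. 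For fresh singletons $a$, any two \emph{distinct} one-point extensions $y_j\cup\{a\}$ and $y_j\cup\{a'\}$ of the \emph{same} $y_j$ satisfy $(y_j\cup\{a\})\cdot(y_j\cup\{a'\})=y_j\notin U(0)$, so $V(0)$ contains at most one such extension of each $y_j$; selecting for each $j$ one extension outside $V(0)$ (with the added singletons chosen so that each ${\uparrow}z$, $z\neq 0$, meets the selection finitely) produces an infinite set of rank $r+1$ avoided by $V(0)$ with the same ``spread-out'' property. Iterating reaches rank $n$, where the set consists of isolated points and forms an infinite locally finite family of open singletons, contradicting feeble compactness. Without this (or an equivalent) upward step, your proof of (iv)\,$\Rightarrow$\,(ii) is incomplete, and since Proposition~\ref{proposition-2.9} shows that feeble compactness alone (without joint continuity) does \emph{not} force compactness, no argument that avoids exploiting $V(0)\cdot V(0)\subseteq U(0)$ in an essential way can succeed here.
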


\begin{proof}
By Proposition~\ref{proposition-2.1} without loss of generality we may assume that $\tau$ is a Hausdorff topology on $\exp_n\lambda$. It is obvious that the following implications  $(i)\Rightarrow(iii)$, $(iii)\Rightarrow(iv)$, $(iii)\Rightarrow(vi)$, $(i)\Rightarrow(v)$ and $(v)\Rightarrow(vi)$ are trivial, and implication $(ii)\Rightarrow(i)$ follows from Proposition~\ref{proposition-2.5}.

$(i)\Rightarrow(ii)$. Suppose that $\tau$ is a compact topology on $\exp_n\lambda$ such that $\left(\exp_n\lambda,\tau\right)$ is a topological semilattice. Then by Proposition~\ref{proposition-2.1}$(iii)$ the identity map 
 \begin{equation*}
\operatorname{\textsf{id}}_{\exp_n\lambda}\colon \left(\exp_n\lambda,\tau\right)\rightarrow \left(\exp_n\lambda,\tau_{\operatorname{\textsf{c}}}^n\right)
\end{equation*}
is continuous, and hence by Theorem~2.1.13 of \cite{Engelking-1989} is a homeomorphism. Thus, we get that $\tau=\tau_{\operatorname{\textsf{c}}}^n$.

Implication $(v)\Rightarrow(i)$ follows from Proposition~VI-1.13 of~\cite{Gierz-Hofmann-Keimel-Lawson-Mislove-Scott-2003} (also from Theorem~6.6 of \cite[p.~242]{JLawson-1973}).

$(vi)\Rightarrow(v)$. We shall prove this implication by induction.

Assume that $n=1$. Suppose to the contrary that there exists a non-compact topology $\tau$ on $\exp_1\lambda$ such that $\left(\exp_1\lambda,\tau\right)$ is a countably compact semitopological semilattice. Then there exists an open cover $\mathscr{U}$ of the space $\left(\exp_1\lambda,\tau\right)$ which contans no a finite subcover. This implies that there exists $U\in\mathscr{U}$ such that $0\in U$ and $\exp_1\lambda\setminus U$ is infinite subset of $\exp_1\lambda$. Then by Proposition~\ref{proposition-2.1}$(iii)$ the space $\left(\exp_1\lambda,\tau\right)$ contains an open-and-closed discrete subspace, which contradicts Theorem~3.10.3 from \cite{Engelking-1989}. Thus, $\left(\exp_1\lambda,\tau\right)$ is a compact semitopological semilattice.

Next, we shall prove that the statement the countably compact semitopological semilattice $\left(\exp_i\lambda,\tau\right)$ is compact for all positive integers $i<k\leqslant n$ implies that the countably compact semitopological semilattice $\left(\exp_{k}\lambda,\tau\right)$ is compact too. Then there exists an open cover $\mathscr{U}$ of the topological space $\left(\exp_{k}\lambda,\tau\right)$ which contains no a finite subcover. Then by Proposition~\ref{proposition-2.1}$(i)$, $\exp_{k-1}\lambda$ is a closed subset of $\left(\exp_{k}\lambda,\tau\right)$, and hence by Theorem~3.10.4 of~\cite{Engelking-1989}, $\exp_{k-1}\lambda$ is countably compact. The assumption of induction implies that $\exp_{k-1}\lambda$ is a compact subspace of $\left(\exp_{k}\lambda,\tau\right)$, and hence the open cover $\mathscr{U}$ of the topological space $\left(\exp_{k}\lambda,\tau\right)$ contains a finite subcover $\mathscr{U}_0$ of $\exp_{k-1}\lambda$. If the open cover $\mathscr{U}$ of the topological space $\left(\exp_{k}\lambda,\tau\right)$ contains no a finite subcover of $\left(\exp_{k}\lambda,\tau\right)$ then by Proposition~\ref{proposition-2.1}$(iii)$ we have that $\exp_{k}\lambda\setminus\bigcup\mathscr{U}_0$ is an open-and-closed discrete subspace of $\left(\exp_{k}\lambda,\tau\right)$, which contradicts Theorem~3.10.3 from \cite{Engelking-1989}. Thus, $\left(\exp_k\lambda,\tau\right)$ is a compact semitopological semilattice. This completes the proof of our implication.

$(iv)\Rightarrow(iii)$. We shall prove this implication by induction.

Assume that $n=1$. Suppose to the contrary that there exists a feebly compact topological semilattice $\tau$ on $\exp_1\lambda$ such that $\left(\exp_1\lambda,\tau\right)$ is not a countably compact space. Then there exists a countable open cover $\mathscr{U}$ of the space $\left(\exp_1\lambda,\tau\right)$ which contains no a finite subcover. This implies that there exists $U\in\mathscr{U}$ such that $0\in U$ and $\exp_1\lambda\setminus U$ is infinite subset of $\exp_1\lambda$. Then by Proposition~\ref{proposition-2.1}$(iii)$ the space $\left(\exp_1\lambda,\tau\right)$ contains an open-and-closed discrete subspace, which contradicts the feeble compactness of $\left(\exp_1\lambda,\tau\right)$, a contradiction. Hence $\left(\exp_1\lambda,\tau\right)$ is a countably compact space.

Next, we shall prove that the statement that every feebly compact topological semilattice $\left(\exp_i\lambda,\tau\right)$ is countably compact for all positive integers $i<k\leqslant n$ implies that the feebly compact topological semilattice $\left(\exp_{k}\lambda,\tau\right)$ is countably compact too.

Suppose to the contrary that every feebly compact topological semilattice $\left(\exp_i\lambda,\tau\right)$ is countably compact for all positive integers $i<k\leqslant n$ but there exists a feebly compact topological semilattice $\left(\exp_{k}\lambda,\tau\right)$ which is not countably compact. Then by Theorem~3.10.3 from \cite{Engelking-1989} the topological semilattice $\left(\exp_{k}\lambda,\tau\right)$ contains an infinite closed discrete subspace $A$. Since by Proposition~\ref{proposition-2.1}$(ii)$, $\exp_{k}\lambda\setminus\exp_{k-1}\lambda$ is an open discrete subspace of $\left(\exp_{k}\lambda,\tau\right)$, the feeble compactness of $\left(\exp_{k}\lambda,\tau\right)$ implies that $A\subseteq \exp_{k-1}\lambda$. Also, by Proposition~\ref{proposition-2.1}$(iii)$ since ${\uparrow}x$ is an open-and-closed subset of the space $\left(\exp_{k}\lambda,\tau\right)$ for every $x\in \exp_{k}\lambda$ we have that ${\uparrow}x$ is a feebly compact subspace of $\left(\exp_{k}\lambda,\tau\right)$. It is obvious that for any non-zero element $x\in\exp_{k}\lambda$ the subsemilattice ${\uparrow}x$ of $\exp_{k}\lambda$ is isomorphic to semilattice $\exp_{m}\lambda$ for some non-negative integer $m<k$. This and the assumption of induction imply that ${\uparrow}x$ is a countably compact subspace of $\left(\exp_{k}\lambda,\tau\right)$ for any non-zero element $x$ of the semilattice $\exp_{k}\lambda$. Hence we get that the set $A\cap {\uparrow}x$ is finite for any non-zero element $x$ of the semilattice $\exp_{k}\lambda$.

Next, by induction we shall show that if for some positive integer $i$ with $2\leqslant i<n$ in a feebly compact topological semilattice $\left(\exp_n\lambda,\tau\right)$ there exists an open neighbourhood $U(0)$ of zero $0$ in $\left(\exp_n\lambda,\tau\right)$ such that $U(0)$ does not contain an infinite subset $A$ of $\exp_i\lambda\setminus \exp_{i-1}\lambda$ such that $A\cap {\uparrow}x$ is finite for any non-zero element $x\in\exp_i\lambda$ and ${\uparrow}x$ is countably compact, then there exists an open neighbourhood $V(0)\subseteq U(0)$ of zero $0$ in $\left(\exp_n\lambda,\tau\right)$ such that $V(0)$ does not contain an infinite subset $A_+$ of $\exp_{i+1}\lambda\setminus \exp_{i}\lambda$ such that $A_+\cap {\uparrow}x$ is finite for any non-zero element $x\in\exp_i\lambda$.

Suppose that in a feebly compact topological semilattice $\left(\exp_n\lambda,\tau\right)$ there exists an open neighbourhood $U(0)$ of zero $0$ such that $U(0)\cap A=\varnothing$ for some infinite subset $A=\left\{x_i\colon i\in\mathbb{N}\right\}\subseteq\exp_1\lambda\setminus\{0\}$. Then the continuity of the semilattice operation in $\left(\exp_n\lambda,\tau\right)$ implies that there exists an open neighbourhood $V(0)\subseteq U(0)$ of zero in $\left(\exp_n\lambda,\tau\right)$ such that $V(0)\cdot V(0)\subseteq U(0)$. Suppose that there exist some distinct $x_{i_0},x_{i_1}\in A$ such that $\left\{x_{i_0},x_{i_1}\right\}\in V(0)$. Then by the inclusion $V(0)\cdot V(0)\subseteq U(0)$ we have that
\begin{equation*}
  \left\{\left\{x_{i_0},x_{i}\right\}\colon i\in\mathbb{N}\setminus\left\{{i_0},{i_1}\right\}\right\}\cap V(0)=\varnothing.
\end{equation*}
This implies that the subspace ${\uparrow}\{x_{i_0}\}$ of $\left(\exp_n\lambda,\tau\right)$ contains a closed discrete subspace, which contradicts the countable compactness of ${\uparrow}\{x_{i_0}\}$. Hence we get that $V(0)\cap A_+=\varnothing$, where
\begin{equation*}
A_+=\left\{\left\{x_{k},x_{l}\right\}\colon x_{k},x_{l} \hbox{~are distinct elements of~} A\right\}.
\end{equation*}

Suppose that in a feebly compact topological semilattice $\left(\exp_n\lambda,\tau\right)$ there exist an open neighbourhood $U(0)$ of zero $0$ and infinite subset $A\subseteq \exp_n\lambda$ such that $U(0)\cap A=\varnothing$ and $|x|=j>1$ for any $x\in A$. Since for any non-zero element $a\in\exp_n\lambda$ the subspace ${\uparrow}a$ is countably compact, without loss of generality we may assume that there exists a countable set $A_1=\left\{x_i\colon i\in\mathbb{N}\right\}$ which consists of singletons from $\exp_n\lambda$ such that $A\cap{\uparrow}x_i$ is a singleton for any positive integer $i$. Then the continuity of the semilattice operation in $\left(\exp_n\lambda,\tau\right)$ implies that there exists an open neighbourhood $V(0)\subseteq U(0)$ of zero in $\left(\exp_n\lambda,\tau\right)$ such that $V(0)\cdot V(0)\subseteq U(0)$. We claim that for any distinct elements $x_p,x_s\in A_1$, $s,p\in\mathbb{N}$ there exists no $x\in{\uparrow}x_p$ such that $y=\left\{\left\{x_s\right\}\cup x\right\}\notin V(0)$. Indeed, in the other case the neighbourhood $V(0)$ does not contain the set $\left\{\left\{x_q\right\}\cup x\colon x_q\in A\setminus \left\{x_s\right\}\right\}$. This implies that the subspace ${\uparrow}x_p$ of  $\left(\exp_n\lambda,\tau\right)$ contains an infinite closed discrete subspace, which contradicts the assumption that ${\uparrow}x_p$ is a countably compact subspace of $\left(\exp_n\lambda,\tau\right)$.
Hence we get that $V(0)\cap A_+=\varnothing$, where
\begin{equation*}
A_+=\left\{\left\{x_i\right\}\cup x\colon x_{i}\in A_1 \hbox{~and~} x\in A\right\}.
\end{equation*}

The above presented arguments imply that the topological semilattice $\left(\exp_n\lambda,\tau\right)$ contains an infinite open-and-closed discrete subspace, which contradicts the feeble compactness of the space  $\left(\exp_n\lambda,\tau\right)$. The obtained contradiction implies the requested implication.
\end{proof}

Proposition~\ref{proposition-2.1}$(iii)$ implies the following corollary.

\begin{corollary}\label{corollary-2.7}
Let $\lambda$ be an arbitrary infinite cardinal. Then every feebly compact $T_1$-topology $\tau$ on the semilattice $\exp_1\lambda$ such that $\left(\exp_1\lambda,\tau\right)$ is a semitopological semilattice, is compact, and hence $\left(\exp_1\lambda,\tau\right)$ is a topological semilattice.
\end{corollary}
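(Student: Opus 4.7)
The plan is to leverage Proposition~\ref{proposition-2.1}$(iii)$ via the structural observation that in $\exp_1\lambda$ one has ${\uparrow}x = \{x\}$ for every $x \neq 0$. Together with Proposition~\ref{proposition-2.1}$(iv)$, this gives that $\tau$ is Hausdorff and that every non-zero element is an isolated point of $(\exp_1\lambda,\tau)$; this is the only structural ingredient I will need.

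To show that $\tau$ is compact, I argue by contradiction. Suppose $\mathscr{U}$ is an open cover of $(\exp_1\lambda,\tau)$ with no finite subcover, and pick $U \in \mathscr{U}$ containing $0$. The set $A := \exp_1\lambda \setminus U$ must be infinite, since otherwise $U$ together with the open singletons $\{x\}$ for $x \in A$ would form a finite subcover from $\mathscr{U}$. I then examine the family $\mathscr{F} = \{\{x\} : x \in A\}$ of non-empty open subsets of $\exp_1\lambda$. This family is infinite, and it is locally finite: the neighbourhood $U$ of $0$ is disjoint from every member of $\mathscr{F}$, and for each $y \neq 0$ the open singleton $\{y\}$ meets at most one member of $\mathscr{F}$. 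This contradicts the Hausdorff characterization of feeble compactness recorded in the introduction, forcing $\tau$ to be compact.

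Once compactness is established, $(\exp_1\lambda,\tau)$ is a compact Hausdorff semitopological semilattice, and Proposition VI-1.13 of~\cite{Gierz-Hofmann-Keimel-Lawson-Mislove-Scott-2003} (the same result invoked in the proof of implication $(v)\Rightarrow(i)$ of Theorem~\ref{theorem-2.6}) upgrades separate continuity of the semilattice operation to joint continuity, yielding the \emph{hence} clause of the statement. The only delicate step is the local-finiteness verification at the zero element, which depends on the specific cover element $U$; every other verification is immediate from the discreteness of $\exp_1\lambda \setminus \{0\}$ supplied by Proposition~\ref{proposition-2.1}$(iii)$.
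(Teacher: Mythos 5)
Your proof is correct and takes essentially the same route as the paper: Proposition~\ref{proposition-2.1}$(iii)$ makes every non-zero element of $\exp_1\lambda$ isolated (since ${\uparrow}x=\{x\}$ for $x\neq 0$), so the complement of a neighbourhood of $0$ yields an infinite locally finite family of non-empty open sets, contradicting feeble compactness, after which the Lawson/Gierz--Hofmann result gives joint continuity. The only cosmetic point is that to conclude $A=\exp_1\lambda\setminus U$ is infinite you should cover each $x\in A$ by a member of $\mathscr{U}$ containing it rather than by the singleton $\{x\}$ itself (the singletons need not belong to $\mathscr{U}$); this does not affect the argument.
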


But, the following example shows that for any infinite cardinal $\lambda$ and any positive integer $n\geqslant 2$ there exists a Hausdorff feebly compact topology $\tau$ on the semilattice $\exp_n\lambda$ such that $\left(\exp_n\lambda,\tau\right)$ is a non-countably compact semitopological semilattice.

\begin{example}\label{example-2.8}
Let $\lambda$ be any infinite cardinal and $\tau_{\operatorname{\textsf{c}}}^2$ be the topology on the semilattice $\exp_2\lambda$ defined in Example~\ref{example-2.4}. We construct more stronger topology $\tau_{\operatorname{\textsf{fc}}}^2$ on $\exp_2\lambda$ them $\tau_{\operatorname{\textsf{c}}}^2$ in the following way. By $\pi\colon\lambda\to\exp_2\lambda\colon a\mapsto\left\{a\right\}$ we denote the natural embedding of $\lambda$ into $\exp_2\lambda$. Fix an arbitrary infinite subset $A\subseteq\lambda$ of cardinality $\leqslant\lambda$. For every non-zero element $x\in\exp_2\lambda$ we put the base $\mathscr{B}_{\operatorname{\textsf{fc}}}^2(x)$ of the topology $\tau_{\operatorname{\textsf{fc}}}^2$ at the point $x$ coincides with the base of the topology $\tau_{\operatorname{\textsf{c}}}^2$ at $x$, and
\begin{equation*}
  \mathscr{B}_{\operatorname{\textsf{fc}}}^2(0)=\left\{U_B(0)=U(0)\setminus \pi(B)\colon U(0)\in\mathscr{B}_{\operatorname{\textsf{c}}}^2(0), B\subseteq\lambda
\hbox{~and the set~} A\setminus B\cup B\setminus A \hbox{~is finite}\right\}
\end{equation*}
form a base of the topology $\tau_{\operatorname{\textsf{fc}}}^2$ at zero $0$ of the semilattice $\exp_2\lambda$. Simple verifications show that the family $\left\{\mathscr{B}_{\operatorname{\textsf{fc}}}^2(x)\colon x\in\exp_2\lambda\right\}$ satisfies the conditions \textbf{(BP1)--(BP4)} of \cite{Engelking-1989}, and hence $\tau_{\operatorname{\textsf{fc}}}^2$ is a Hausdorff topology on $\exp_2\lambda$.
\end{example}

\begin{proposition}\label{proposition-2.9}
Let $\lambda$ be an arbitrary infinite cardinal. Then \emph{$\left(\exp_2\lambda,\tau_{\operatorname{\textsf{fc}}}^2\right)$} is a countably pracompact semitopological semilattice such that \emph{$\left(\exp_2\lambda,\tau_{\operatorname{\textsf{fc}}}^2\right)$} is an $H$-closed non-semiregular space.
\end{proposition}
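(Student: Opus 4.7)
The plan is to verify, in sequence, (a) separate continuity of the semilattice operation, (b) countable pracompactness via an explicit dense set, (c) $H$-closedness, and (d) failure of semiregularity; the single technical tool behind (c) and (d) is the closure identity $\operatorname{cl}_{\tau_{\operatorname{\textsf{fc}}}^2}(U_B(0))=U(0)$ for every basic neighbourhood $U_B(0)=U(0)\setminus\pi(B)$ of $0$. Separate continuity reduces to the verification in Proposition~\ref{proposition-2.5} at all non-zero points (where the bases of $\tau_{\operatorname{\textsf{fc}}}^2$ and $\tau_{\operatorname{\textsf{c}}}^2$ coincide); when the target $a\cdot b$ equals $0$ one chooses a basic $U_B(0)$ with $U(0)\subseteq\exp_2\lambda\setminus({\uparrow}\{a_1\}\cup\cdots\cup{\uparrow}\{a_k\})$ where $a=\{a_1,\dots,a_k\}$, and then $a\cdot v=0$ for every $v$ in the neighbourhood (the subsequent removal of $\pi(B)$ only shrinks the neighbourhood).

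For countable pracompactness I take the dense set $D=\exp_2\lambda\setminus\pi(A)$. Density is immediate: $0\in D$ lies in every $U_B(0)$, and every basic $U_x(x_1,\dots,x_j)$ at a non-zero $x$ either contains $x\in D$ outright or contains cofinitely many $2$-element supersets of $x$, all of which lie in $D$. Let $C\subseteq D$ be infinite. If $C$ contains infinitely many $2$-element sets, then either some $a\in\lambda$ appears in infinitely many of them --- so $\{a\}$ is an accumulation point, its neighbourhoods being cofinite in ${\uparrow}\{a\}$ --- or an infinite pairwise disjoint subfamily can be extracted, whence $0$ is an accumulation point since every $U_B(0)$ excludes only finitely many $2$-element sets. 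If instead $C$ consists of infinitely many singletons $\{c\}$, then $c\in\lambda\setminus A$ and, because $B\bigtriangleup A$ is finite, $U_B(0)$ omits only finitely many such $\{c\}$, so $0$ is again an accumulation point.

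The closure identity follows from noting that (i) $U(0)$ is $\tau_{\operatorname{\textsf{fc}}}^2$-closed --- its complement $\bigcup_i{\uparrow}x_i$ is a finite union of $\tau_{\operatorname{\textsf{fc}}}^2$-open sets, as checked directly from the definition --- and (ii) every singleton $\{c\}\in U(0)\setminus U_B(0)$ has all its neighbourhoods cofinite in ${\uparrow}\{c\}$, hence meeting $U_B(0)$ in cofinitely many $2$-element sets. With this, $H$-closedness follows from the standard criterion (a Hausdorff space is $H$-closed iff every open cover has a finite subfamily whose closures cover the space): given a cover $\mathscr{U}$, choose $U_0\in\mathscr{U}$ containing $0$ and a basic $U_B(0)\subseteq U_0$; then
\[
\exp_2\lambda\setminus\operatorname{cl}(U_0)\;\subseteq\;\exp_2\lambda\setminus U(0)\;=\;\bigcup_i{\uparrow}x_i,
\]
a finite union of $\tau_{\operatorname{\textsf{fc}}}^2$-compact subsets (the topology induced on each ${\uparrow}x_i$ coincides with that of $\tau_{\operatorname{\textsf{c}}}^2$, so compactness is supplied by Proposition~\ref{proposition-2.5}); finitely many further $U_1,\dots,U_m\in\mathscr{U}$ cover this compact set.

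For the failure of semiregularity I show that $U_B(0)$ contains no regular open neighbourhood of $0$. Suppose $W$ were one: then $W\supseteq U_{B'}(0')$ for some basic $U_{B'}(0')=U(0')\setminus\pi(B')$, and in particular $W$ contains every $2$-element set in $U(0')$. Let $P'$ denote the finite set of $a\in\lambda$ for which $\{a\}$ is a singleton-generator of $U(0')$. For every $c\notin P'$, cofinitely many $\{c,b\}$ lie in $W$, so $\{c\}$ has a basic neighbourhood contained in $\operatorname{cl}(W)$; since $W=\operatorname{int}(\operatorname{cl}(W))$, this forces $\{c\}\in W$. Thus $W\supseteq U(0')$, and combining with $W\subseteq U(0)\setminus\pi(B)$ yields $P\cup B\subseteq P'$, impossible since $B$ is infinite and $P'$ finite. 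The main obstacle is pinning down the closure identity itself; once in hand, both $H$-closedness and the failure of semiregularity follow almost mechanically.
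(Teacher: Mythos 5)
Your proposal is correct in substance and in two places takes a genuinely different route from the paper. For $H$-closedness the paper argues by contradiction: it assumes $\left(\exp_2\lambda,\tau_{\operatorname{\textsf{fc}}}^2\right)$ sits densely in a larger Hausdorff space, separates an outside point from $0$, and contradicts feeble compactness; you instead prove the closure identity $\operatorname{cl}(U_B(0))=U(0)$ and invoke the finite-closure-cover characterisation of $H$-closed spaces, reducing everything to compactness of the finitely many upsets ${\uparrow}x_i$, whose subspace topology is untouched by the refinement. This is cleaner and more self-contained. More importantly, you actually prove non-semiregularity: the paper's proof of Proposition~\ref{proposition-2.9} never addresses that part of the statement, whereas your argument (a regular open $W$ with $0\in W\subseteq U_B(0)$ must contain every singleton $\{c\}$ with $c$ outside a finite set, because cofinitely many $\{c,b\}$ lie in $W$ and hence $\{c\}\in\operatorname{int}(\operatorname{cl}(W))=W$; this clashes with $W\cap\pi(B)=\varnothing$ for infinite $B$) is sound and again rests on the closure identity. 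Your dense set $D=\exp_2\lambda\setminus\pi(A)$ is larger than the paper's $\exp_2\lambda\setminus\exp_1\lambda$ but serves equally well.

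Two justifications need repair. First, in the pracompactness step you assert that every $U_B(0)$ excludes only finitely many $2$-element sets; this is false as stated, since if $U(0)=\exp_2\lambda\setminus{\uparrow}\{c\}$ then $U_B(0)$ misses all the infinitely many sets $\{c,d\}$. What is true, and what your extraction of a pairwise disjoint subfamily is evidently aimed at, is that each of the finitely many removed upsets ${\uparrow}x_i$ meets a pairwise disjoint family of $2$-element sets in at most one member, so $U_B(0)$ omits only finitely many members of that subfamily; with this correction the accumulation at $0$ is right. Second (an omission shared with the paper), separate continuity must also be checked at products $a\cdot b=0$ with both $a,b$ non-zero and disjoint, where the target neighbourhood of $0$ has shrunk: there one removes from the $\tau_{\operatorname{\textsf{c}}}^2$-neighbourhood of $b$ the finitely many upsets ${\uparrow}\left(b\cup\{c\}\right)$ for those $c\in a$ with $\{c\}\notin U_B(0)$. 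Both repairs are routine.
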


\begin{proof}
The definition of the topology $\tau_{\operatorname{\textsf{fc}}}^2$ implies that it is sufficient to show that the semilattice operation is separately continuous in the case $x\cdot 0$. Fix an arbitrary basic neighbourhood $U_B(0)$ of zero in $\left(\exp_2\lambda,\tau_{\operatorname{\textsf{fc}}}^2\right)$. If $x$ is a singleton of $\lambda$, i.e., $x=\{x_0\}$ in $\exp_2\lambda$, then we have that $x\cdot V_B(0)=\{0\}\subseteq U_B(0)$, where $V(0)=U(0)\setminus{\uparrow}x$. In the case when $x$ is a two-elements subset of $\lambda$, where $x=\{x_1,x_2\}$ for some $x_1,x_2\in\lambda$, the we get that $x\cdot W_B(0)=\{0\}\subseteq U_b(0)$, where $W(0)=U(0)\setminus\left({\uparrow}\left\{x_1\right\}\cup{\uparrow}\left\{x_2\right\}\right)$.

Also, the definition of the topology $\tau_{\operatorname{\textsf{fc}}}^2$ on $\exp_2\lambda$ implies that the set $\exp_2\lambda\setminus\exp_1\lambda$ is dense in $\left(\exp_2\lambda,\tau_{\operatorname{\textsf{fc}}}^2\right)$ and every infinite subset of $\exp_2\lambda\setminus\exp_1\lambda$ has an accumulation point in $\left(\exp_2\lambda,\tau_{\operatorname{\textsf{fc}}}^2\right)$, and hence the space $\left(\exp_2\lambda,\tau_{\operatorname{\textsf{fc}}}^2\right)$ is countably pracompact.

Suppose to the contrary that $\left(\exp_2\lambda,\tau_{\operatorname{\textsf{fc}}}^2\right)$ is not an $H$-closed topological space. Then there exists a Hausdorff topological space $X$ which contains $\left(\exp_2\lambda,\tau_{\operatorname{\textsf{fc}}}^2\right)$ as a dense proper subspace. Fix an arbitrary $x\in X\setminus \exp_2\lambda$. Since $X$ is Hausdorff there exist disjunctive open neighbourhoods $U(x)$ and $U(0)$ of $x$ and zero $0$ of the semilattice $\exp_2\lambda$ in $X$, respectively. Then there exists a basic neighbourhood $V_B(0)$ of zero in $\left(\exp_2\lambda,\tau_{\operatorname{\textsf{fc}}}^2\right)$ such that $V_B(0)\subseteq \exp_2\lambda\cap U(0)$. Also, the definition of the base $\mathscr{B}_{\operatorname{\textsf{fc}}}^2(0)$ of the topology $\tau_{\operatorname{\textsf{fc}}}^2$ at zero $0$ of the semilattice $\exp_2\lambda$ implies that there exist finitely many non-zero elements $x_1,\ldots,x_m$ of the semilattice $\exp_2\lambda$ such that $\exp_2\lambda\setminus\left({\uparrow}x_1\cup\ldots\cup{\uparrow}x_m\cup V_B(0)\right)\subseteq B$, and since by Proposition~\ref{proposition-2.1}$(iii)$ the subsets ${\uparrow}x_1,\ldots,{\uparrow}x_m$ are open-and-closed in $\left(\exp_2\lambda,\tau_{\operatorname{\textsf{fc}}}^2\right)$ without loss of generality we may assume that $U(x)\cap\exp_2\lambda\subseteq B$. If the set $U(x)\cap\exp_2\lambda\subseteq B$ is infinite then the space $\left(\exp_2\lambda,\tau_{\operatorname{\textsf{fc}}}^2\right)$ contains a discrete infinite open-and-closed subspace, which contradicts the feeble compactness of $\left(\exp_2\lambda,\tau_{\operatorname{\textsf{fc}}}^2\right)$. The obtained contradiction implies that the space $\left(\exp_2\lambda,\tau_{\operatorname{\textsf{fc}}}^2\right)$ is $H$-closed.
\end{proof}

\begin{remark}\label{remark-2.10}
If $n$ is an arbitrary positive integer $\geqslant 3$, $\lambda$ is any infinite cardinal and $\tau_{\operatorname{\textsf{c}}}^n$ is the topology on the semilattice $\exp_n\lambda$ defined in Example~\ref{example-2.4}, then we construct more stronger topology $\tau_{\operatorname{\textsf{fc}}}^n$ on $\exp_n\lambda$ them $\tau_{\operatorname{\textsf{c}}}^2$ in the following way. Fix an arbitrary element $x\in \exp_n\lambda$ such that $|x|=n-2$. It is easy to see that the subsemilattice ${\uparrow}x$ of $\exp_n\lambda$ is isomorphic to $\exp_2\lambda$, and by $h\colon\exp_2\lambda\to{\uparrow}x$ we denote this isomorphism.

Fix an arbitrary subset $A\subseteq\lambda$ of cardinality $\leqslant\lambda$. For every zero element $y\in\exp_n\lambda\setminus{\uparrow}x$ we put the base $\mathscr{B}_{\operatorname{\textsf{fc}}}^n(y)$ of the topology $\tau_{\operatorname{\textsf{fc}}}^n$ at the point $y$ coincides with the base of the topology $\tau_{\operatorname{\textsf{c}}}^n$ at $y$, and put ${\uparrow}x$ is an open-and-closed subset and the topology on ${\uparrow}x$ is generated by map $h\colon\left(\exp_2\lambda,\tau_{\operatorname{\textsf{fc}}}^2\right)\to{\uparrow}x$. Simple verifications as in the proof of Proposition~\ref{proposition-2.9} show that \emph{$\left(\exp_n\lambda,\tau_{\operatorname{\textsf{fc}}}^n\right)$} is a countably pracompact semitopological semilattice such that \emph{$\left(\exp_n\lambda,\tau_{\operatorname{\textsf{fc}}}^n\right)$} is an $H$-closed quasiregular non-semiregular space.
\end{remark}

\begin{remark}\label{remark-2.10a}
Simple verifications show that \emph{$\left(\exp_2\lambda,\tau_{\operatorname{\textsf{fc}}}^2\right)$} is not a $0$-dimensional space. This implies that the term ``hereditarily disconnected'' in item $(v)$ of Proposition~\ref{proposition-2.1} can not be replaced by ``$0$-dimensional''.
\end{remark}

A $T_1$-space $X$ is called \emph{collectionwise normal} if for every discrete family $\left\{F_s\right\}_{s\in\mathscr{A}}$ of closed subsets of $X$ there exists a discrete family $\left\{U_s\right\}_{s\in\mathscr{A}}$ of open subsets of $X$ such that $F_s\subseteq U_s$ for every $s\in\mathscr{A}$ \cite{Engelking-1989}.

\begin{proposition}\label{proposition-2.11}
Let $\lambda$ be an arbitrary infinite cardinal and $\tau$ be a $T_1$-topology on $\exp_1\lambda$ such that $\left(\exp_1\lambda,\tau\right)$ is a semitopological semilattice. Then the space $\left(\exp_1\lambda,\tau\right)$ is collectionwise normal.
\end{proposition}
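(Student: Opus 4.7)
The plan is to exploit the very special structure of $(\exp_1\lambda,\tau)$. By Proposition~\ref{proposition-2.1}$(ii)$, every non-zero element $x\in\exp_1\lambda$ has an open neighbourhood contained in ${\uparrow}x=\{x\}$, so every non-zero point of $(\exp_1\lambda,\tau)$ is isolated. Thus $(\exp_1\lambda,\tau)$ has at most one non-isolated point, namely $0$, and the whole argument reduces to handling that single point.

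Let $\{F_s\}_{s\in\mathscr{A}}$ be an arbitrary discrete family of closed subsets of $(\exp_1\lambda,\tau)$. Every discrete family in a $T_1$-space is pairwise disjoint, so at most one of the $F_s$ can contain $0$; if such an index exists, fix it and denote it by $s_0$. For every $s\neq s_0$ the set $F_s$ consists entirely of isolated points of $(\exp_1\lambda,\tau)$ and is therefore automatically open. By the discreteness of the family, the point $0$ admits an open neighbourhood $W$ meeting at most one member of $\{F_s\}$, which must be $F_{s_0}$ (or none, in which case $0$ lies in no $F_s$ and we simply set $U_s=F_s$ for every $s$). In the nontrivial case define
\begin{equation*}
U_{s_0}=W\cup (F_{s_0}\setminus\{0\}), \qquad U_s=F_s \;\text{ for } s\neq s_0.
\end{equation*}
Each $U_s$ is open and contains $F_s$, so it only remains to check that $\{U_s\}_{s\in\mathscr{A}}$ is a discrete family.

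For the discreteness verification, the neighbourhood $W$ itself works at $0$: by construction $W$ is disjoint from $F_s$ for every $s\neq s_0$, and since each such $U_s$ equals $F_s$, the set $W$ meets only $U_{s_0}$. For an isolated point $p\neq 0$, the one-point neighbourhood $\{p\}$ suffices: if $p$ lies in $U_s$ with $s\neq s_0$ then $p\in F_s$, and pairwise disjointness of $\{F_s\}$ together with $W\cap F_s=\varnothing$ forces $p\notin U_{s_0}$; while if $p\in U_{s_0}\setminus F_{s_0}$, i.e.\ $p\in W$, then $p$ cannot lie in any $F_s$ with $s\neq s_0$, hence in no other $U_s$. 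No conceptual obstacle arises; the only point requiring care is the bookkeeping in this last verification, specifically that the neighbourhood $W$ of $0$ is chosen so as not to meet any $F_s$ with $s\neq s_0$. Since collectionwise normal $T_1$-spaces are in particular normal, this argument simultaneously yields normality of $(\exp_1\lambda,\tau)$.
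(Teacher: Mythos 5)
Your argument is correct and follows essentially the same route as the paper: all non-zero points of $\left(\exp_1\lambda,\tau\right)$ are isolated, so every member of the discrete family is open off the point $0$, and one enlarges the (at most one) member met by a suitable neighbourhood of $0$ by that neighbourhood. The only cosmetic slip is the identification of ``the member containing $0$'' with ``the member met by $W$'' (these can differ when $0$ lies in no $F_s$), but in that case the family $\left\{F_s\right\}_{s\in\mathscr{A}}$ is already a discrete family of open sets, so nothing is lost.
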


\begin{proof}
Suppose that $\left\{F_s\right\}_{s\in\mathscr{A}}$ is a discrete family of closed subsets of $\left(\exp_1\lambda,\tau\right)$. By Proposition~\ref{proposition-2.1}$(iii)$ all non-zero elements of the semilattice $\exp_1\lambda$ are isolated points in the space $\left(\exp_1\lambda,\tau\right)$. Hence, if there exists an open neighbourhood $U(0)$ of zero in $\left(\exp_1\lambda,\tau\right)$ such that $U(0)\cap F_{s}=\varnothing$ for all $s\in\mathscr{A}$ then we put $U_s=F_s$ for all $s\in\mathscr{A}$. In other case there exists an open neighbourhood $U(0)$ of zero in $\left(\exp_1\lambda,\tau\right)$ such that $U(0)\cap F_{s_0}\neq\varnothing$ for some $s_0\in\mathscr{A}$ and $U(0)\cap F_{s}=\varnothing$ for all $s\in\mathscr{A}\setminus\left\{s_0\right\}$. We put
\begin{equation*}
  U_s=
  \left\{
    \begin{array}{cl}
      F_s,              & \hbox{if~} s\in\mathscr{A}\setminus\left\{s_0\right\}; \\
      F_{s_0}\cup U(0), & \hbox{if~} s=s_0.
    \end{array}
  \right.
\end{equation*}
Then Proposition~\ref{proposition-2.1}$(ii)$ implies that $\left\{U_s\right\}_{s\in\mathscr{A}}$ is a discrete family $\left\{U_s\right\}_{s\in\mathscr{A}}$ of open subsets of $\left(\exp_1\lambda,\tau\right)$ such that $F_s\subseteq U_s$ for every $s\in\mathscr{A}$, and hence the space $\left(\exp_1\lambda,\tau\right)$ is collectionwise normal.
\end{proof}

\begin{remark}\label{remark-2.12}
A topological space $X$ is called \emph{perfectly normal} if $X$ is normal and every closed subset of $X$ is a $G_\delta$-set. It is obvious that if $\lambda$ is any uncountable cardinal then $\left(\exp_1\lambda,\tau_{\operatorname{\textsf{c}}}^1\right)$ is a compact space which is not perfectly normal (see: \cite[Section~1.5]{Engelking-1989}).
\end{remark}

\begin{theorem}\label{theorem-2.13}
Let $n$ be an arbitrary positive integer and $\lambda$ be an arbitrary infinite cardinal. Then every semilattice $T_1$-topology on $\exp_n\lambda$ is regular.
\end{theorem}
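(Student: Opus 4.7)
The plan is to prove regularity at every point $x\in\exp_n\lambda$ directly, without induction on $n$, by combining joint continuity of the semilattice operation with two structural facts already secured: every up-set ${\uparrow}z$ is clopen (Proposition~\ref{proposition-2.1}$(iii)$), and every principal order ideal ${\downarrow}z=\{w\colon w\subseteq z\}$ in $\exp_n\lambda$ is finite, containing at most $2^{|z|}\leqslant 2^n$ elements.

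Given a point $x$ and an open neighbourhood $U\ni x$, joint continuity of the operation at $(x,x)\mapsto x$ furnishes an open $V\ni x$ with $V\cdot V\subseteq U$; when $x\neq 0$ one can further arrange $V\subseteq{\uparrow}x$ using Proposition~\ref{proposition-2.1}$(ii)$. The goal is then to verify $\overline{V}\subseteq U$, which supplies the closed neighbourhood $\overline{V}$ of $x$ inside $U$ required for regularity. The subcase $y=0\in\overline{V}$ is immediate: if $x=0$ then $0\in V\subseteq U$, while if $x\neq 0$ the inclusion $V\subseteq{\uparrow}x$ combined with the closedness of the clopen ${\uparrow}x$ forces $\overline{V}\subseteq{\uparrow}x$, so $0\notin\overline{V}$ and this subcase does not arise.

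The substantive case is $y\in\overline{V}$ with $y\neq 0$. Since ${\uparrow}y$ is an open neighbourhood of $y$, some $z\in V\cap{\uparrow}y$ exists and satisfies $z\cdot y=y$. The key step is to consider the continuous translation $\phi_z\colon\exp_n\lambda\to\exp_n\lambda$, $w\mapsto z\cdot w$. Its image $\phi_z(V)$ lies inside the finite ideal ${\downarrow}z$; since $(\exp_n\lambda,\tau)$ is Hausdorff by Proposition~\ref{proposition-2.1}$(iv)$, finite sets are closed, so $\phi_z(V)$ is closed. Therefore
\begin{equation*}
y=\phi_z(y)\in\phi_z(\overline{V})\subseteq\overline{\phi_z(V)}=\phi_z(V),
\end{equation*}
which yields some $z'\in V$ with $z\cdot z'=y$. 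Then $y=z\cdot z'\in V\cdot V\subseteq U$ as required.

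The main obstacle I anticipate is finding the right use of continuity at the point $y\in\overline{V}$, since $y$ is not known a priori to lie in $U$, and so joint continuity at $(y,y)$ with target $U$ is unavailable. The trick is to transport closures through left translation by an element $z\in V$ lying above $y$: the image of this translation lands in the finite principal ideal ${\downarrow}z$, which collapses the closure of the image to the image itself, thereby reducing the problem back to the already-available inclusion $V\cdot V\subseteq U$.
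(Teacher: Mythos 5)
Your proof is correct. It shares the paper's overall skeleton --- take $V\ni x$ with $V\cdot V\subseteq U$ via joint continuity, then show $\operatorname{cl}_{\exp_n\lambda}(V)\subseteq U$ by writing each point $y$ of the closure as a product of two elements of $V$ --- but the execution of the key step is genuinely different, and in two respects simpler. First, the paper reduces to regularity at the zero (using that each ${\uparrow}x$ is a clopen copy of some $\exp_m\lambda$ with $x$ as its zero), whereas you argue at an arbitrary point directly, so no reduction or induction is needed. Second, to produce the second factor the paper carries out an explicit combinatorial construction: for a boundary point it names the finitely many elements $x_1,\ldots,x_{j-i}$ covering $\left({\uparrow}x\right)\setminus\{w\colon y\cdot w=x\}$, deletes their up-sets from a neighbourhood of the boundary point, and argues the resulting open set still meets $V$. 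Your translation trick --- $\phi_z(V)\subseteq{\downarrow}z$ is finite, hence closed, hence $\phi_z(\operatorname{cl}(V))\subseteq\operatorname{cl}(\phi_z(V))=\phi_z(V)$ --- packages the same underlying finiteness of principal down-sets into one clean line and avoids the bookkeeping entirely. All the ingredients you invoke are available: ${\uparrow}y$ is open by Proposition~\ref{proposition-2.1}$(iii)$, so $V\cap{\uparrow}y\neq\varnothing$ for $y\in\operatorname{cl}(V)$; translations are continuous; and finite sets are closed in a $T_1$-space (your appeal to Hausdorffness via Proposition~\ref{proposition-2.1}$(iv)$ is more than enough). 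One cosmetic remark: your separate treatment of $y=0$ is unnecessary, since the general argument applies verbatim with ${\uparrow}0=\exp_n\lambda$; and the subcase ``$x=0$, hence $0\in V\subseteq U$'' should read ``$0=x\in U$'' unless you first shrink $V$ to lie inside $U$, which costs nothing. Note also that your argument makes essential use of joint continuity only in obtaining $V\cdot V\subseteq U$, which is exactly where it must: the paper's Example~\ref{example-2.8} shows the theorem fails for merely semitopological semilattices.
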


\begin{proof}
Suppose that $\tau$ is a $T_1$-topology on $\exp_n\lambda$ such that $\left(\exp_n\lambda,\tau\right)$ is a topological semilattice. In the case when $n=1$ the statement of the theorem follows from Proposition~\ref{proposition-2.11}. Hence, later we assume that $n\geqslant 2$.

By Proposition~\ref{proposition-2.1}$(iii)$, ${\uparrow}x$ is an open-and-closed subsemilattice of $\left(\exp_n\lambda,\tau\right)$ for any $x\in\exp_n\lambda$, and hence it will be sufficient to show that for every open neighbourhood $U(0)$ of zero in $\left(\exp_n\lambda,\tau\right)$ there exists an open neighbourhood $V(0)$ of zero in $\left(\exp_n\lambda,\tau\right)$ such that $\operatorname{cl}_{\exp_n\lambda}(V(0))\subseteq U(0)$.

Fix an arbitrary open neighbourhood $U(0)$ of zero in $\left(\exp_n\lambda,\tau\right)$. Then the continuity of the semilattice operation in $\left(\exp_n\lambda,\tau\right)$ implies that there exists an open neighbourhood $V(0)\subseteq U(0)$ of zero in $\left(\exp_n\lambda,\tau\right)$ such that $V(0)\cdot V(0)\subseteq U(0)$. Suppose that there exists $x\in\operatorname{cl}_{\exp_n\lambda}(V(0))\setminus V(0)$. By Proposition~\ref{proposition-2.1}$(ii)$ we have that $x\in{\downarrow}V(0)$. We assume that $x=\left\{a_1,\ldots,a_i\right\}$ as a finite subset of the cardinal $\lambda$, where $i<n$, i.e., $x\in\exp_i\lambda\setminus\exp_{i-1}\lambda$. Then $V(x)\cap V(0)\neq\varnothing$ for every open neighbourhood $V(x)$ of the point $x$ in $\left(\exp_n\lambda,\tau\right)$. Proposition~\ref{proposition-2.1}$(iii)$ implies that without loss of generality we may assume that $V(x)\subseteq{\uparrow}x$.  Fix an arbitrary $y\in\left(V(0)\cap V(x)\right)\setminus\{x\}$. Then we may assume that $y=\left\{a_1,\ldots,a_i,a_{i+1},\ldots,a_j\right\}$ as a finite subset of the cardinal $\lambda$, where $i<j\leqslant n$. We put
\begin{equation*}
  x_1=\left\{a_1,\ldots,a_i,a_{i+1}\right\},  \ldots , x_{j-i}=\left\{a_1,\ldots,a_i,a_{j}\right\},
\end{equation*}
as finite subsets of the cardinal $\lambda$. Then the semilattice operation of $\exp_n\lambda$ implies that $y\in{\uparrow}x_1\cup\cdots\cup{\uparrow}x_{j-i} \subseteq {\uparrow}x$ and $y\cdot z=x$ for every $z\in{\uparrow}x\setminus\left({\uparrow}x_1\cup\cdots\cup{\uparrow}x_{j-i}\right)$. Since $x\in\operatorname{cl}_{\exp_n\lambda}(V(0))\setminus V(0)$, Proposition~\ref{proposition-2.1}$(iii)$ implies that $W(x)=V(x)\setminus\left({\uparrow}x_1\cup\cdots\cup{\uparrow}x_{j-i}\right)$ is an open neighbourhood of the point $x$ in $\left(\exp_n\lambda,\tau\right)$. Then the above arguments imply that $x=y\cdot W(x)\subseteq V(0)\cdot V(0)\subseteq U(0)$ and hence $\operatorname{cl}_{\exp_n\lambda}(V(0))\subseteq U(0)$.
\end{proof}

Since in any countable $T_1$-space $X$ every open subset of $X$ is a $F_\sigma$-set, Theorem~1.5.17 from \cite{Engelking-1989} and Theorem~\ref{theorem-2.13} imply the following corollary.

\begin{corollary}\label{corollary-2.14}
Let $n$ be an arbitrary positive integer. Then every semilattice $T_1$-topology on $\exp_n\omega$ is perfectly normal.
\end{corollary}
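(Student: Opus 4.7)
The plan is to assemble the corollary from three ingredients: countability of the underlying set, regularity from Theorem~\ref{theorem-2.13}, and the general fact invoked as Theorem~1.5.17 of \cite{Engelking-1989}. The first observation is that for each positive integer $n$ the set $\exp_n\omega=\{A\subseteq\omega\colon |A|\leqslant n\}$ is a countable union (over $k=0,1,\ldots,n$) of the collections of $k$-element subsets of $\omega$, each of which has cardinality $\leqslant \aleph_0$; hence $\exp_n\omega$ is itself countable. Consequently, any $T_1$-topology $\tau$ on $\exp_n\omega$ makes $(\exp_n\omega,\tau)$ a countable $T_1$-space, and since singletons are closed, every open set is a countable union of closed singletons, i.e.\ an $F_\sigma$-set; equivalently, every closed subset is a $G_\delta$-set.

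Next, assuming $\tau$ is a semilattice $T_1$-topology, Theorem~\ref{theorem-2.13} gives that $(\exp_n\omega,\tau)$ is regular. Because a countable space is trivially Lindel\"of, a standard argument (the content of Theorem~1.5.17 of \cite{Engelking-1989}) shows that every regular Lindel\"of space is normal: for disjoint closed sets $F_0,F_1$, cover each by countably many open sets whose closures miss the other, then construct two disjoint open supersets by the usual inductive shrinking procedure. In particular $(\exp_n\omega,\tau)$ is normal.

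Combining the two conclusions, $(\exp_n\omega,\tau)$ is normal and every closed subset is a $G_\delta$-set, so by definition it is perfectly normal, yielding the corollary. There is essentially no obstacle: the only slight care is in citing the correct form of Engelking's result, namely that a regular Lindel\"of $T_1$-space in which every closed set is $G_\delta$ is perfectly normal — both hypotheses of which are verified above via countability plus Theorem~\ref{theorem-2.13}.
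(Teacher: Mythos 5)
Your proposal is correct and follows essentially the same route as the paper: countability of $\exp_n\omega$ gives that every closed set is a $G_\delta$ (every open set an $F_\sigma$), Theorem~\ref{theorem-2.13} supplies regularity, and a standard general-topology fact (the paper's citation of Theorem~1.5.17 of \cite{Engelking-1989}) upgrades this to perfect normality. The only difference is that you unpack that citation via the regular-plus-Lindel\"of-implies-normal argument, which the paper leaves implicit.
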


Later we need the following lemma:

\begin{lemma}\label{lemma-2.15}
Let $n$ be an arbitrary positive integer and $\lambda$ be an arbitrary infinite cardinal. Let $\left(\exp_n\lambda,\tau\right)$ be a Hausdorff feebly compact semitopological semilattice. Then for every open neighbourhood $U(0)$ of zero in $\left(\exp_n\lambda,\tau\right)$ there exist finitely many non-zero elements $x_1,\ldots,x_i\in \exp_n\lambda$ such that
\begin{equation*}
\exp_n\lambda\setminus \exp_{n-1}\lambda\subseteq U(0)\cup {\uparrow}x_1\cup\ldots\cup {\uparrow}x_i.
\end{equation*}
\end{lemma}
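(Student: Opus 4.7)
My plan is to argue by contradiction using the Hausdorff characterization of feeble compactness mentioned in the preamble: every locally finite family of non-empty open subsets of a Hausdorff feebly compact space is finite. The strategy is to assume the conclusion fails, then build an infinite locally finite family of singletons $\{\{y_k\}\colon k\in\mathbb{N}\}$ with $y_k\in\exp_n\lambda\setminus\exp_{n-1}\lambda$, which directly contradicts feeble compactness.

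Assume that no finite collection $x_1,\ldots,x_i$ of non-zero elements satisfies the conclusion. I would then construct the $y_k$ inductively so that they are \emph{pairwise disjoint} subsets of $\lambda$ and all lie outside $U(0)$. At stage $k$, having chosen $y_1,\ldots,y_{k-1}$, I apply the failed conclusion to the finite family of singletons $\left\{\{a\}\colon a\in y_1\cup\cdots\cup y_{k-1}\right\}$ (these are non-zero elements of $\exp_n\lambda$). This yields $y_k\in\exp_n\lambda\setminus\exp_{n-1}\lambda$ with $y_k\notin U(0)$ and $y_k\notin{\uparrow}\{a\}$ for every such $a$; the latter means $a\notin y_k$ for every $a\in y_1\cup\cdots\cup y_{k-1}$, giving $y_k\cap y_j=\varnothing$ for all $j<k$, as required. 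Each $y_k$ is a maximal element of $\exp_n\lambda$, so ${\uparrow}y_k=\{y_k\}$ is open by Proposition~\ref{proposition-2.1}$(iii)$; hence $\{\{y_k\}\colon k\in\mathbb{N}\}$ is an infinite family of non-empty open sets.

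It remains to verify local finiteness of this family at every $p\in\exp_n\lambda$. If $|p|=n$, then $\{p\}$ itself is an open neighbourhood of $p$ meeting at most one $\{y_k\}$. If $0<|p|<n$, Proposition~\ref{proposition-2.1}$(ii)$ supplies an open neighbourhood $V(p)\subseteq{\uparrow}p$; any $y_k\in V(p)$ satisfies $p\subseteq y_k$, and since the $y_k$ are pairwise disjoint with $p\neq\varnothing$, at most one such $y_k$ exists. Finally at $p=0$, the neighbourhood $U(0)$ itself contains no $y_k$ by construction. Thus the family is locally finite, infinite, and consists of non-empty open sets, contradicting feeble compactness of the Hausdorff space $(\exp_n\lambda,\tau)$.

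The main obstacle I anticipate is precisely the point $p=0$: controlling local finiteness around zero is the only place where the topology on $\exp_n\lambda$ is genuinely opaque, and without the hypothesis ``$y_k\notin U(0)$'' built into the construction there would be no a priori reason for some neighbourhood of zero to meet only finitely many $y_k$. The elegance of the argument is that the very neighbourhood $U(0)$ given in the statement doubles as the witness of local finiteness at $0$, while pairwise disjointness of the $y_k$ handles every other point uniformly via Proposition~\ref{proposition-2.1}$(ii)$.
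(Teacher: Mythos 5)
Your proof is correct and follows essentially the same route the paper sketches: the points of $\exp_n\lambda\setminus\exp_{n-1}\lambda$ are isolated by Proposition~\ref{proposition-2.1}$(ii)$, and feeble compactness is violated by an infinite locally finite family of such open singletons, with Proposition~\ref{proposition-2.1}$(ii)$--$(iii)$ supplying the neighbourhoods ${\uparrow}p$ that witness local finiteness. Your pairwise-disjointness construction is a clean way of making precise the detail the paper's two-line proof leaves implicit.
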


\begin{proof}
By Proposition~\ref{proposition-2.1}$(ii)$ every point $x\in \exp_n\lambda\setminus\exp_{n-1}\lambda$ is isolated in $\left(\exp_n\lambda,\tau\right)$. Next, we apply the feeble compactness of $\left(\exp_n\lambda,\tau\right)$ and Proposition~\ref{proposition-2.1}$(iii)$.
\end{proof}

\begin{theorem}\label{theorem-2.16}
Let $n$ be an arbitrary positive integer and $\lambda$ be an arbitrary infinite cardinal. Then every semiregular feebly compact $T_1$-topology $\tau$ on $\exp_n\lambda$ such that $\left(\exp_n\lambda,\tau\right)$ is a semitopological semilattice, is compact, and hence the semilattice operation in $\left(\exp_n\lambda,\tau\right)$ is continuous.
\end{theorem}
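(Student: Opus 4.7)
I would prove the theorem by induction on $n$. The base case $n=1$ is Corollary~\ref{corollary-2.7}. For the inductive step let $n\geqslant 2$, assume the theorem for all positive integers $k<n$, and let $(\exp_n\lambda,\tau)$ be a $T_1$-semiregular feebly compact semitopological semilattice; by Proposition~\ref{proposition-2.1}$(iv)$ the topology is Hausdorff. By Theorem~\ref{theorem-2.6} it suffices to show $(\exp_n\lambda,\tau)$ is countably compact (as a semitopological semilattice), since the equivalence of items $(vi)$ and $(i)$ there upgrades this to a compact topological semilattice. A preliminary observation: for every non-zero $y\in\exp_n\lambda$, the clopen subsemilattice ${\uparrow}y$ (by Proposition~\ref{proposition-2.1}$(iii)$) inherits $T_1$-ness, semiregularity, and feeble compactness, and as a semilattice is isomorphic to $\exp_{n-|y|}\lambda$ with $n-|y|<n$, so by the inductive hypothesis ${\uparrow}y$ is a compact topological semilattice.

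Suppose for contradiction that $(\exp_n\lambda,\tau)$ contains an infinite closed discrete subset $D$. By Proposition~\ref{proposition-2.1}$(ii)$ the points of $\exp_n\lambda\setminus\exp_{n-1}\lambda$ are isolated, so if $D\cap(\exp_n\lambda\setminus\exp_{n-1}\lambda)$ were infinite the singletons at those points would form an infinite locally finite family of non-empty open sets, contradicting feeble compactness. Replacing $D$ by $D\cap\exp_{n-1}\lambda$ and discarding $0$ if necessary, I may assume $D\subseteq\exp_{n-1}\lambda\setminus\{0\}$. By the preliminary observation, $D\cap{\uparrow}y$ is finite for every non-zero $y$. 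Using semiregularity and the fact that $D$ is closed with $0\notin D$, pick a regular open neighborhood $V(0)$ of $0$ with $V(0)\cap D=\varnothing$. Applying Lemma~\ref{lemma-2.15} to $V(0)$, one obtains non-zero elements $y_1,\ldots,y_i\in\exp_n\lambda$ with $\exp_n\lambda\setminus\exp_{n-1}\lambda\subseteq V(0)\cup{\uparrow}y_1\cup\cdots\cup{\uparrow}y_i$. Since each $D\cap{\uparrow}y_j$ is finite, the set $D':=D\setminus({\uparrow}y_1\cup\cdots\cup{\uparrow}y_i)$ is still infinite; for each $d\in D'$, by Proposition~\ref{proposition-2.1}$(ii)$, semiregularity, and discreteness of $D$, pick a regular open $W_d$ with $d\in W_d\subseteq{\uparrow}d\setminus({\uparrow}y_1\cup\cdots\cup{\uparrow}y_i)$ and $W_d\cap D=\{d\}$.

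The plan is to verify that $\{W_d\}_{d\in D'}$ is an infinite locally finite family of non-empty open sets in $(\exp_n\lambda,\tau)$, contradicting feeble compactness. For a point $x\neq 0$, ${\uparrow}x$ is a clopen compact neighborhood of $x$ (by the inductive hypothesis), and the finiteness of $D\cap{\uparrow}x$ together with $W_d\subseteq{\uparrow}d$ lets one bound the number of $d\in D'$ whose $W_d$ meets a sufficiently small sub-neighborhood of $x$, using Proposition~\ref{proposition-2.1}$(iii)$ to cut away $W_d$'s attached to incompatible $d$'s. The delicate case is $x=0$: here one must exploit the regular openness equality $V(0)=\operatorname{int}_{\exp_n\lambda}(\operatorname{cl}_{\exp_n\lambda}(V(0)))$ together with separate continuity of the semilattice operation at $0$ to show that some shrinking of $V(0)$ meets only finitely many $W_d$. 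This last step is the main obstacle — it is precisely where semiregularity must be used essentially, because Example~\ref{example-2.8} exhibits an $H$-closed quasiregular but non-semiregular topology in which such an infinite closed discrete $D$ does exist. Closed subspaces of feebly compact spaces are in general not feebly compact, and $\exp_{n-1}\lambda$ is typically not clopen, so the inductive hypothesis cannot be applied to $\exp_{n-1}\lambda$ naively; semiregularity is what converts the discreteness of $D$ near $0$ into genuine local finiteness of $\{W_d\}$.
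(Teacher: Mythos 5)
Your setup coincides with the paper's: induction on $n$ with Corollary~\ref{corollary-2.7} as base, pushing the infinite closed discrete set $D$ into $\exp_{n-1}\lambda\setminus\{0\}$ via feeble compactness, compactness of each clopen ${\uparrow}y$ from the inductive hypothesis, a regular open $V(0)$ missing $D$, and Lemma~\ref{lemma-2.15}. But the concluding step, which you flag as the main obstacle, is not merely unfinished --- the contradiction you aim for is unreachable by that route, because the family $\left\{W_d\right\}_{d\in D'}$ is \emph{never} locally finite at $0$. Indeed, each ${\uparrow}d$ is compact, so by Theorem~\ref{theorem-2.6} it carries the topology $\tau_{\operatorname{\textsf{c}}}^{n-|d|}$ of Example~\ref{example-2.4}; hence any open $W_d\ni d$ inside ${\uparrow}d$ contains ${\uparrow}d$ minus finitely many proper upper sets, and in particular infinitely many $n$-element sets above $d$. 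Now take any open $O(0)\ni 0$ and apply Lemma~\ref{lemma-2.15} to get $z_1,\ldots,z_m$ with $\exp_n\lambda\setminus\exp_{n-1}\lambda\subseteq O(0)\cup{\uparrow}z_1\cup\cdots\cup{\uparrow}z_m$. Since each $D\cap{\uparrow}z_l$ is finite, for all but finitely many $d\in D'$ no $z_l$ lies below $d$, so each ${\uparrow}d\cap{\uparrow}z_l$ is a proper upper subset of ${\uparrow}d$, and $W_d$ still contains an $n$-element set avoiding all ${\uparrow}z_l$; that element lies in $O(0)$. Thus \emph{every} neighbourhood of $0$ meets cofinitely many $W_d$, and no shrinking of $V(0)$ --- with or without semiregularity or separate continuity --- can repair this.

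The correct use of semiregularity (and what the paper actually does) points in the opposite direction: instead of separating $V(0)$ from the $W_d$, one shows that each $d\in D'$ is swallowed by $V(0)$ itself. For $d\in D'$ the set $N_d={\uparrow}d\setminus\left(({\uparrow}d\cap{\uparrow}y_1)\cup\cdots\cup({\uparrow}d\cap{\uparrow}y_i)\right)$ is an open neighbourhood of $d$; every point $e\in N_d$ has, densely near it in the compact topology of ${\uparrow}e$, maximal elements of $\exp_n\lambda$ avoiding all ${\uparrow}y_j$, and by the covering relation from Lemma~\ref{lemma-2.15} all such maximal elements lie in $V(0)$. Hence $N_d\subseteq\operatorname{cl}_{\exp_n\lambda}(V(0))$, so $d\in\operatorname{int}_{\exp_n\lambda}\left(\operatorname{cl}_{\exp_n\lambda}(V(0))\right)=V(0)$ by regular openness, contradicting $V(0)\cap D=\varnothing$. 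Replacing your ``locally finite family'' plan by this argument, the rest of your proof goes through; note also that this explains why Example~\ref{example-2.8} survives as a counterexample without semiregularity: there the regular open sets do not form a base at $0$, so one cannot choose $V(0)$ regular open and disjoint from $D$.
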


\begin{proof}
We shall prove the statement of the theorem by induction. In the case when $n=1$ the statement of the theorem follows from Corollary~\ref{corollary-2.7}. First we consider the initial step: $n=2$. Suppose to the contrary that there exists a semiregular feebly compact non-compact $T_1$-semitopological semilattice $\left(\exp_2\lambda,\tau\right)$. By Theorem~\ref{theorem-2.6} the topological space $\left(\exp_2\lambda,\tau\right)$ is not countably compact, and hence Theorem~3.10.3 of \cite{Engelking-1989} implies that $\left(\exp_2\lambda,\tau\right)$ contains an infinite closed discrete subspace $X$. Now, Proposition~\ref{proposition-2.1}$(iii)$ implies that $\exp_2\lambda\setminus\exp_1\lambda$ is an open discrete subspace of $\left(\exp_2\lambda,\tau\right)$, and since $\left(\exp_2\lambda,\tau\right)$ is feebly compact, without loss of generality we may assume that $X\subseteq \exp_1\lambda\setminus\{0\}$. Fix an arbitrary regular open neighbourhood $U(0)$ of zero in $\left(\exp_2\lambda,\tau\right)$ such that $U(0)\cap X=\varnothing$.

For every $x\in \exp_{1}\lambda\setminus\{0\}$ the subset ${\uparrow}x$ is open-and-closed in $\left(\exp_2\lambda,\tau\right)$ and hence ${\uparrow}x$ is feebly compact. Since ${\uparrow}x$ is algebraically isomorphic to $\exp_{1}\lambda$, Corollary~\ref{corollary-2.7} implies that ${\uparrow}x$ is compact. By Lemma~\ref{lemma-2.15} there exist finitely many non-zero elements $x_1,\ldots,x_i\in \exp_2\lambda$ such that
\begin{equation*}
\exp_2\lambda\setminus \exp_{1}\lambda\subseteq U(0)\cup {\uparrow}x_1\cup\ldots\cup {\uparrow}x_i.
\end{equation*}
The semilattice operation of $\left(\exp_2\lambda,\tau\right)$ implies that without loss of generality we may assume that $x_1,\ldots,x_i$ are singleton subsets of the cardinal $\lambda$. This and above presented arguments imply that $\operatorname{cl}_{\exp_2\lambda}(U(0))\cap X\neq\varnothing$. Moreover, we have that the narrow $\operatorname{cl}_{\exp_2\lambda}(U(0))\setminus U(0)$ consists of singleton subsets of the cardinal $\lambda$. Then for every $x\in \operatorname{cl}_{\exp_2\lambda}(U(0))\setminus U(0)$ by Corollary~\ref{corollary-2.7}, ${\uparrow}x$ is a  compact topological subsemilattice of $\left(\exp_2\lambda,\tau\right)$. Now, Theorem~\ref{theorem-2.6} and Lemma~\ref{lemma-2.15} imply that $x\in \operatorname{int}_{\exp_2\lambda} \left(\operatorname{cl}_{\exp_2\lambda}(U(0))\right)=U(0)$, which contradicts the assumption $U(0)\cap X=\varnothing$. The obtained contradiction implies that $\left(\exp_2\lambda,\tau\right)$ is a compact semitopological semilattice.

Next we shall show the step of induction, i.e., that the statement \emph{if for every positive integer $l<n$ a semiregular feebly compact $T_1$-semitopological semilattice $\left(\exp_l\lambda,\tau\right)$ is compact} implies that a semiregular feebly compact $T_1$-semitopological semilattice $\left(\exp_n\lambda,\tau\right)$ is compact too. Suppose to the contrary that there exists a semiregular feebly compact non-compact $T_1$-semitopological semilattice $\left(\exp_n\lambda,\tau\right)$ which is not compact. By Theorem~\ref{theorem-2.6} the topological space $\left(\exp_n\lambda,\tau\right)$ is not countably compact, and hence Theorem~3.10.3 of \cite{Engelking-1989} implies that $\left(\exp_n\lambda,\tau\right)$ contains an infinite closed discrete subspace $X$. Now, by Proposition~\ref{proposition-2.1}$(iii)$, $\exp_n\lambda\setminus\exp_{n-1}\lambda$ is an open discrete subspace of $\left(\exp_n\lambda,\tau\right)$, and since $\left(\exp_n\lambda,\tau\right)$ is feebly compact, without loss of generality we may assume that $X\subseteq \exp_{n-1}\lambda\setminus\{0\}$.

Put $k<n$ is the maximum positive integer such that the set $\exp_k\lambda\setminus\exp_{k-1}\lambda\cap X$ is infinite. We observe that for any non-zero element $x\in\exp_n\lambda$ the subsemilattice ${\uparrow}x$ of $\exp_n\lambda$ is algebraically isomorphic to the semilattice $\exp_j\lambda$ for some positive integer $j<n$, and since by Proposition~\ref{proposition-2.1}$(iii)$, ${\uparrow}x$ is an open-and-closed subset of a feebly compact semitopological semilattice $\left(\exp_n\lambda,\tau\right)$, the assumption of induction implies that ${\uparrow}x$ is a compact subsemilattice of $\left(\exp_n\lambda,\tau\right)$. This implies that there do not exist finitely many non-zero elements $y_1,\ldots,y_s$ of the semitopological semilattice $\left(\exp_n\lambda,\tau\right)$ such that $X\subseteq{\uparrow}y_1\cup\ldots\cup{\uparrow}y_s$.

Fix an arbitrary regular open neighbourhood $U(0)$ of zero in $\left(\exp_n\lambda,\tau\right)$ such that $U(0)\cap X=\varnothing$. Then the above arguments imply that $\operatorname{cl}_{\exp_n\lambda}(V(0))\cap\left(\exp_k\lambda\cap X\right)\neq\varnothing$. Moreover, we have that the narrow $\operatorname{cl}_{\exp_n\lambda}(U(0))\setminus U(0)$ contains infinitely many $k$-element subsets of the cardinal $\lambda$ which belongs to the set $X$. Then for every such element $x\in \operatorname{cl}_{\exp_n\lambda}(U(0))\setminus U(0)$ the assumtion of induction implies that ${\uparrow}x$ is a compact topological subsemilattice of $\left(\exp_n\lambda,\tau\right)$. Now, Theorem~\ref{theorem-2.6} and Lemma~\ref{lemma-2.15} imply that every such element $x$ belongs to the set $\operatorname{int}_{\exp_n\lambda} \left(\operatorname{cl}_{\exp_n\lambda}(U(0))\right)=U(0)$, which contradicts the assumption $U(0)\cap X=\varnothing$. The obtained contradiction implies that $\left(\exp_n\lambda,\tau\right)$ is a compact semitopological semilattice.

The last assertion of the theorem follows from Theorem~\ref{theorem-2.6}.
\end{proof}

\section*{Acknowledgements}

We acknowledge Taras Banakh and the referee for they useful comments and
suggestions.

\end{document}